\renewcommand{\epsilon}{\ensuremath{\varepsilon}}
\renewcommand{\phi}{\ensuremath{\varphi}}
\renewcommand{\to}{\ensuremath{\longrightarrow}}
\newcommand{\Z}{\ensuremath{\mathbb Z}}
\newcommand{\D}{\ensuremath{\mathbb D}}
\newcommand{\T}{\ensuremath{\mathbb{T}^2}}
\renewcommand{\ker}[1]{\ensuremath{\operatorname{\text{Ker}}({#1})}}
\renewcommand{\p@enumii}{}
\def\@enum@{\list{\csname label\@enumctr\endcsname}%
           {\usecounter{\@enumctr}\def\makelabel##1{
\normalfont\ignorespaces\emph{{##1}~}}
\setlength{\labelsep}{3pt}
\setlength{\parsep}{0pt}
\setlength{\itemsep}{5pt}
\setlength{\leftmargin}{0pt}
\setlength{\labelwidth}{0pt}
\setlength{\listparindent}{\parindent}
\setlength{\itemindent}{0pt}
\setlength{\topsep}{3pt plus 1pt minus 1 pt}}}
\def\@map#1#2[#3]{\mbox{$#1 \colon\thinspace #2 \to #3$}}
\def\map#1#2{\@ifnextchar [{\@map{#1}{#2}}{\@map{#1}{#2}[#2]}}
\DeclareRobustCommand*\textsubscript[1]{\@textsubscript{\selectfont#1}}
\def\@textsubscript#1{{\m@th\ensuremath{_{\mbox{\fontsize\sf@size\z@#1}}}}}
\newcommand{\brak}[1]{\ensuremath{\left\{ #1 \right\}}}
\newtheorem{thm}{Theorem}
\newtheorem{prop}[thm]{Proposition}
\newtheorem{cor}[thm]{Corollary}
\newtheorem{deft}{Definition}
\newcommand{\reth}[1]{Theorem~\protect\ref{th:#1}}
\newcommand{\repr}[1]{Proposition~\protect\ref{prop:#1}}
\newcommand{\eop}{%
  \relax
  \ifvmode
    \noindent
  \else
    \unskip
    \hskip0pt plus-1fill\relax 
  \fi
  \vrule width0pt
  \nobreak
  \hfill 
  {\hspace*{\fill}$\Box$}
}
\newenvironment{proof}{\par\vspace{\partopsep}\noindent\emph{Proof.}}
{\eop\par\vspace{\parsep}}
\newenvironment{prooftext}[1]{\par\vspace{\parsep}\noindent{\emph{Proof of #1.}}}
{\eop\par\vspace{\parsep}}
\newtheorem{rem}{Remark}
\newtheorem{rems}[rem]{Remarks}
\newcommand{\sn}[1][n]{\ensuremath{\mathfrak{S}_{{#1}}}}
\newcommand{\MCGB}{\Gamma_{g,b}}
\newcommand{\PMCGB}{\Gamma_{g,b}^{n}}
\newcommand{\PPMCGB}{\mathrm{P}\Gamma_{g,b}^{n}}
\newcommand{\FPMCGB}{\mathrm{F}\Gamma_{g,b}^{n}}
\newcommand{\PFPMCGB}{\mathrm{P}\mathrm{F}\Gamma_{g,b}^{n}}
\newcommand{\diff}[1]{\mathrm{Diff}^{+}(#1)}
\newcommand{\build}[3]{\mathrel{\mathop{\kern 0pt#1}\limits_{#2}^{#3}}}
\newcounter{liste}
\renewcommand{\@makeenmark}{\hbox{\,\@theenmark}}
\renewcommand{\enoteformat}{\parindent=1.5em\leavevmode\llap{\hbox{\bf \@theenmark}}}
\begin{document}

 \title{Surface framed braids}
 \author{Paolo Bellingeri and Sylvain Gervais}


\date{\empty}
\maketitle

 \begin{abstract}
In this paper we introduce the framed pure braid group on $n$ strands of an oriented surface, a  
topological generalisation of the pure braid group
$P_n$. We give different equivalents definitions for framed pure braid groups and we study exact sequences
relating these groups  with other generalisations of $P_n$, usually called  surface pure braid groups.
The notion of surface framed braid groups is also introduced.
 \end{abstract}

\begingroup
 \renewcommand{\thefootnote}{}
 \footnotetext{2000 AMS Mathematics Subject Classification: 20F36, 57M05}
 \endgroup


\section{Introduction}
Let $\Sigma=\Sigma_{g,b}$ be an oriented surface of genus $g$ with $b$ boundary components (we will  note 
$\Sigma_{g}:=\Sigma_{g,0}$) and let $\mathcal{P}=\{p_1, \dots,p_n\}$ be a set of $n$ distinct points 
(\emph{punctures}) in the interior of $\Sigma$.
Let $C_n(\Sigma)=\Sigma^n \setminus \Delta$, where $\Delta$ is  the  set of 
$n$-tuples $\underline{x}=(x_1, \dots, x_n)$ for which $x_i=x_j$ for some $i \not= j$. The fundamental group  
$\pi_1(C_n(\Sigma),\underline{p})$ is called  \emph{pure braid group on $n$ strands of the 
surface} $\Sigma$; it shall be denoted by $P_n(\Sigma)$.\\
The symmetric group $\sn$ acts freely on $C_{n}(\Sigma)$ by permutation of 
coordinates. We denote $\widehat{C_{n}}(\Sigma)$ the quotient space $C_{n}(\Sigma)/\sn$. The fundamental 
group of $\widehat{C_{n}}(\Sigma)$ is called  \emph{braid group on $n$ strands of the 
surface} $\Sigma$; it shall be denoted by $B_n(\Sigma)$.\\
Since the projection map $C_{n}(\Sigma)\to\widehat{C_{n}}(\Sigma)$ is a regular covering space with transformation 
group $\sn$, one has the following exact sequence:

\begin{equation*}\label{eq:covering sequence}
1\to P_{n}(\Sigma)\to B_{n}(\Sigma)\to \sn\to 1.
\end{equation*}

On the other hand, from the homotopy exact sequence associated to the fibration $C_{n+m}(\Sigma)\to C_{n}(\Sigma)$, we 
get~(\cite{FN,Bir}) an exact sequence: 

\begin{equation*}\label{eq:pure braid sequence}
(SPB) \quad 1\to P_{m}\left(\Sigma \setminus n \; \mbox{points} \;\right)\to P_{n+m}(\Sigma)\to 
P_{n}(\Sigma)\to 1
\end{equation*}
when $\Sigma$ has positive genus or genus equal to zero and non-empty boundary (in the case of the sphere 
such a sequence holds for $n+ m\ge 4$). In the following we will denote this sequence by $(SPB)$ 
(\emph{Surface Pure Braids}).

When  $\Sigma$ has boundary, the (SPB) sequence has an evident geometric section, which corresponds to add 
$m$ punctures "at the infinity" (see~\cite{B,GG}); if $\Sigma$ is closed and different from the sphere and the torus, the
(SPB) sequence  splits if and only if  $n=1$ (see~\cite{GG} for the general case and~\cite{BB} for an algebraical section
in the case  $n=1$).
In the case of the disk this sequence has some additional 
features~\cite{FR} and it is  a powerful tool in the study of finite type invariants for links~\cite{pap}.
In the general case the (SPB) sequence can be used to find a group presentation for  $P_n(\Sigma)$
(see for instance \cite{B}).

\smallskip
In this paper we introduce framed braid groups $FB_{n}(\Sigma)$ and $FP_{n}(\Sigma)$ of a surface $\Sigma$, 
which generalise respectively framed braid groups introduced in~\cite{KS} and framed pure braid groups 
considered in Theorem 5.1 of~\cite{MM}. These groups turn out also to be related to 
generalisations of Hilden groups introduced in~\cite{BC}. For surfaces of genus greater than $1$, with boundary or closed,
we give three equivalent definitions of these groups: in terms of configuration 
spaces, as subgroups of mapping class 
groups (Section~\ref{section:definitions})  and  as subgroups  of 
braid groups of surfaces (Section~\ref{section:frameddehn} and~\ref{section:framesbraidsas2nstrands}). We  prove that, 
when the surface is closed and of genus greater than $1$, these 
groups are non trivial central extensions of 
surface pure braid groups (Section~\ref{section:classictoframed}) and 
we provide a group presentation for $FP_{n}(\Sigma)$ (Section~\ref{section:presframed}, Theorem~\ref{presentationframed})
and therefore for $FB_n(\Sigma)$ (Theorem~\ref{presentationbraidframed}). We show also that the 
sequence~($SPB$) extends naturally to a sequence on framed braids, that we will call \emph{framed surface 
pure braid sequence} (denoted by~(FSPB)) and which splits even in the 
case of closed surfaces (Section~\ref{section:framed}, Theorem~\ref{th:framedsequence}).
In the case of the torus the proposed definitions are not equivalent and let arise different notions of framings:
this case will treated separately in the last Section.


\section{Framed braids: possible definitions}\label{section:definitions}

\subsection{Framed braids via configuration spaces}

Let $U\Sigma$ be the unit tangent bundle of $\Sigma$ and $\pi:U\Sigma\to\Sigma$ be the natural projection. We 
denote by $F_{n}(\Sigma)$ the subspace $(\pi^{n})^{-1}(C_{n}(\Sigma))$ of $(U\Sigma)^n$ and fix a unit tangent vector 
$v_{i}$ of $\Sigma$ at $p_{i}$ such that $F_{n}(\Sigma)$ is based at 
$\underline{v}=(p_{i},v_{i})_{i=1,\ldots,n}$. The symmetric group $\sn$ acts freely on $F_{n}(\Sigma)$: we 
denote $\widehat{F_{n}}(\Sigma)$ the quotient space $F_{n}(\Sigma)/\sn$.

\begin{deft}\label{defframed}
The \emph{pure framed braid group} $FP_{n}(\Sigma)$ on $n$ strands of $\Sigma$ is the fundamental group of 
$F_{n}(\Sigma)$. The \emph{framed braid group} on $n$ strands of $\Sigma$ is the fundamental group of 
$\widehat{F_{n}}(\Sigma)$.
\end{deft}

\pagebreak[4]\noindent
Thus, a framed braid can be seen as a family of $n$ continuous paths $b_{i}:[0,1]\to U\Sigma$ for $i=1,
\ldots, n$
such that :
\begin{list}{\arabic{liste})}{\usecounter{liste}\leftmargin=25mm}
\item $b_{i}(0)=(p_{i},v_{i})$ for all $i\in\brak{1,\ldots,n}$;
\item $\exists\sigma\in\sn$ such that $b_{i}(1)=(p_{\sigma(i)},v_{\sigma(i)})$ for all
$i\in\brak{1,\ldots,n}$;
\item $\pi b_{i}(t)\neq\pi b_{j}(t)$ when $i\neq j$ for any $t\in [0,1]$.
\end{list}

Since the projection map $F_{n}(\Sigma)\to\widehat{F_{n}}(\Sigma)$ is a regular covering space with 
transformation group $\sn$, the framed braid group and the pure framed braid group are related by the 
following exact sequence:

\begin{equation}\label{eq:framed covering sequence}
1\to FP_{n}(\Sigma)\to FB_{n}(\Sigma)\to \sn\to 1
\end{equation}

\subsection{Framed braids as  mapping classes}\label{section:mapping classes}

In this section we will give an interpretation of framed braid groups of an oriented  surface different from the sphere and the torus in terms 
of mapping classes.

\subsubsection{Notations} Let $\diff{\Sigma_{g,b}}$ denote the group of orientation preserving diffeomorphisms of $\Sigma_{g,b}$ which are the 
identity on the boundary. Recall that the  \emph{mapping class group} of $\Sigma_{g,b}$, denoted $\MCGB$, is defined to be 
$\pi_{0}(\diff{\Sigma_{g,b}})$, where $\diff{\Sigma_{g,b}}$ is equipped with the compact open topology. Note that we will 
denote the composition in the mapping class groups from left to right\footnote{We do this in order to have 
the same group-composition in braid groups and mapping class groups.}. In the following, greek letters will be 
used to denote simple closed curves on $\Sigma_{g,b}$ and if $\alpha$ is such a curve, $\tau_{\alpha}$ will denote the Dehn 
twist along $\alpha$.\\
We shall also consider different subgroups of $\diff{\Sigma_{g,b}}$ and associated mapping class groups:

\medskip
$\bullet$ $\diff{\Sigma_{g,b},\mathcal{P}}=\{h\in\diff{\Sigma_{g,b}}\,/\,\exists\sigma\in\sn,\ h(p_{i})=p_{\sigma(i)}\}$,
and the \emph{punctured mapping class group} $\PMCGB=\pi_{0}(\diff{\Sigma_{g,b},\mathcal{P}})$;

\medskip
$\bullet$ $\diff{\Sigma_{g,b},\underline{p}}=\{h\in\diff{\Sigma_{g,b},\mathcal{P}}\,/\,h(p_{i})=p_{i}\}$ 
and the \emph{pure punctured mapping class group} $\PPMCGB=\pi_{0}(\diff{\Sigma_{g,b},\underline{p}})$;

\medskip
$\bullet$ $\diff{\Sigma_{g,b},\mathcal{V}}=\{h\in\diff{\Sigma_{g,b}}\,/\,\exists\sigma\in\sn,\ h(p_{i})=p_{\sigma(i)}
\ \mathrm{and}\ d_{p_{i}}h(v_{i})=v_{\sigma(i)}\}$ (where 
$\mathcal{V}=\{(p_{1},v_{1}),\ldots,(p_{n},v_{n})\}$  is a set of $n$ distinct points on $\Sigma_{g,b}$
equipped with $n$ unit tangent vectors $v_1, \ldots, v_n$) and the \emph{framed punctured mapping class group}
$\FPMCGB=\pi_{0}(\diff{\Sigma_{g,b},\mathcal{V}})$;

\medskip
$\bullet$ $\diff{\Sigma_{g,b},\underline{v}}=\{h\in\diff{\Sigma_{g,b},\mathcal{V}}\,/\,h(p_{i})=p_{i}\ \mathrm{and}\ 
d_{p_{i}}h(v_{i})=v_{i}\}$,  and the \emph{pure framed punctured mapping class group} 
$\PFPMCGB=\pi_{0}(\diff{\Sigma_{g,b},\underline{v}})$.

\begin{rem}
A preserving orientation diffeomorphism $h$ of $\Sigma_{g,b}$ such that $h(p_{i})=p_{i}$ and 
$d_{p_{i}}h(v_{i})=v_{i}$ is isotopic to a  
diffeomorphism which is equal to identity on a small disc around 
$p_{i}$. Thus, the two groups $\PFPMCGB$ and $\Gamma_{g,b+n}$ are 
isomorphic.
\end{rem}

\medskip
Let us also recall that if $\Sigma'$ is a subsurface of $\Sigma$ and $i:\Sigma'\hookrightarrow\Sigma$ is 
the inclusion map, there is a canonical morphism $i_{*}$ from the mapping  class group $\Gamma(\Sigma')$ of 
$\Sigma'$ to the mapping  class group $\Gamma(\Sigma)$ of $\Sigma$, which consists in extending each diffeomorphism $h$ of $\Sigma'$ by identity on 
$\Sigma\setminus\Sigma'$. When $\Sigma$ is a genus $g$ surface with $b$ boundary components and 
$\Sigma\setminus\Sigma'$ is a collection of $n$ disjoint discs (see Figure~\ref{figure:embedding}), 
we shall denote by $\lambda_{g,b}^{n}:\Gamma_{g,b+n}\to\Gamma_{g,b}$ this morphism ($\lambda_{g}^{n}$ when 
$b=0$).

\begin{figure}[h]
\begin{center}
\psfig{figure=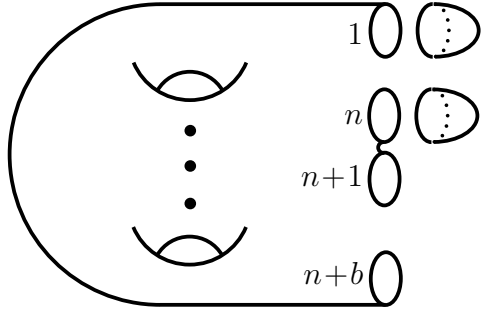}
\caption{\label{figure:embedding}
The embedding $\Sigma_{g,b+n}\hookrightarrow\Sigma_{g,b}$.}
\end{center}
\end{figure}

\subsubsection{Braids groups as mapping class groups}

Braid groups of surface are related to mapping class groups as follows (see \cite{Bir,S1}):

\begin{prop}\label{prop:braids as mapping classes}
Let $n\geq 1$. Let $\psi_{n}:\PMCGB\to\MCGB$ and $\varphi_{n}:\PPMCGB\to\MCGB$ be the homomorphisms induced by 
the forgetting map $\diff{\Sigma_{g,b},\mathcal{P}} \to 
\diff{\Sigma_{g,b}}$ and $\diff{\Sigma_{g,b},\underline{p}} \to \diff{\Sigma_{g,b}}$.\\
1) If $(g,b)\notin \{(0,0),(1,0)\}$, $\ker{\psi_{n}}$ and $\ker{\varphi_{n}}$ are 
respectively isomorphic to $B_{n}(\Sigma_{g,b})$ and $P_{n}(\Sigma_{g,b})$.

\noindent
2) When $b=0$ and $g\in \{0,1\}$, $\ker{\psi_{n}}$ and $\ker{\varphi_{n}}$ are respectively isomorphic to 
$B_{n}(\Sigma_{g})/Z(B_{n}(\Sigma_{g}))$ and $P_{n}(\Sigma_{g})/Z(P_{n}(\Sigma_{g}))$ (where $Z(G)$ is the center of the group 
$G$).
\end{prop}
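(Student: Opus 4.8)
The plan is to exhibit the kernels as surface braid groups by using the fibration structure behind the forgetting maps. The key observation is that the forgetting map on diffeomorphism groups fits into an evaluation fibration whose fibre records the motion of the punctures (and, for the framed case, their tangent vectors). First I would consider the evaluation map $\diff{\Sigma_{g,b}} \to C_n(\Sigma_{g,b})$ sending $h$ to $(h(p_1),\ldots,h(p_n))$; its image is the configuration space and its fibre over the basepoint is precisely $\diff{\Sigma_{g,b},\underline{p}}$. The group $\diff{\Sigma_{g,b},\mathcal{P}}$ similarly maps to $\widehat{C_n}(\Sigma_{g,b})$. Applying $\pi_0$ and $\pi_1$ to these fibrations, I would extract a long exact sequence of homotopy groups relating $\pi_1(C_n(\Sigma_{g,b}),\underline{p}) = P_n(\Sigma_{g,b})$, $\pi_1(\diff{\Sigma_{g,b}})$ and $\pi_0(\diff{\Sigma_{g,b},\underline{p}}) = \PPMCGB$.

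The crucial point then becomes the homotopy type of $\diff{\Sigma_{g,b}}$. For $(g,b)\notin\{(0,0),(1,0)\}$ (equivalently, surfaces whose Euler characteristic forbids a nontrivial $S^1$-action), a classical theorem of Earle--Eells states that $\diff{\Sigma_{g,b}}$ has contractible connected components; in particular $\pi_1(\diff{\Sigma_{g,b}})=0$. Plugging this into the long exact sequence, the connecting map $\pi_1(C_n(\Sigma_{g,b})) \to \pi_0(\diff{\Sigma_{g,b},\underline{p}})$ becomes injective with image exactly $\ker{\varphi_n}$, giving the isomorphism $\ker{\varphi_n}\cong P_n(\Sigma_{g,b})$; the braid version follows identically using $\widehat{C_n}$ and gives $\ker{\psi_n}\cong B_n(\Sigma_{g,b})$. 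This establishes part~1).

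For part~2), where $b=0$ and $g\in\{0,1\}$, the identity component of $\diff{\Sigma_{g}}$ is no longer contractible: the sphere and the torus admit nontrivial continuous symmetries, so $\pi_1(\diff{\Sigma_{g}})\neq 0$. Concretely, $\diff{S^2}$ is homotopy equivalent to $\mathrm{SO}(3)$ (giving $\pi_1\cong\Z/2$) and $\diff{\T}$ to $\T \rtimes \mathrm{SL}(2,\Z)$ (giving $\pi_1\cong\Z^2$). Now the tail of the long exact sequence reads $\pi_1(\diff{\Sigma_{g}}) \to P_n(\Sigma_{g}) \to \ker{\varphi_n}\to 1$, so $\ker{\varphi_n}$ is the quotient of $P_n(\Sigma_{g})$ by the image of this map. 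I would identify that image with the center $Z(P_n(\Sigma_{g}))$: the elements coming from $\pi_1(\diff{\Sigma_{g}})$ are the braids traced out by rotating all strands simultaneously along a global isometry, and these are precisely the central elements of $P_n(\Sigma_{g})$. The same computation with $\widehat{C_n}$ yields $\ker{\psi_n}\cong B_n(\Sigma_{g})/Z(B_n(\Sigma_{g}))$.

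The main obstacle I anticipate is the center identification in part~2): one must check not only that the image of $\pi_1(\diff{\Sigma_{g}})$ lands in the center, but that it is the \emph{whole} center, and this requires an explicit description of $Z(P_n(\Sigma_{g}))$ for the sphere and the torus together with a geometric recognition of the global-rotation braids. The fibration argument and the Earle--Eells input in part~1) are essentially formal once the theorem is invoked; the delicate bookkeeping is entirely concentrated in these two exceptional low-genus cases, where the failure of contractibility of the diffeomorphism group must be converted into precise central subgroups.
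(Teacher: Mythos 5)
Your argument is correct and is essentially the paper's own: the paper states this proposition as a known result (citing Birman and Scott) and proves the framed analogue, \repr{framed braids as mapping classes}, by exactly your evaluation-fibration argument, using contractibility of the components of $\diff{\Sigma_{g,b}}$ in the non-exceptional cases and the long exact homotopy sequence of $\diff{\Sigma_{g,b}}\to C_{n}(\Sigma_{g,b})$ (resp.\ $\widehat{C_{n}}(\Sigma_{g,b})$). The one input you flag but do not carry out --- that the image of $\pi_{1}(\diff{\Sigma_{g}})$ in $P_{n}(\Sigma_{g})$ is the \emph{whole} center when $g\in\{0,1\}$ --- is precisely the content of the computations $Z(B_{n}(S^{2}))\cong\Z/2\Z$ (generated by the full twist, the image of the generator of $\pi_{1}(\mathrm{SO}(3))$) and $Z(B_{n}(\T))\cong\Z^{2}$ (generated by the two parallel-translation braids, the image of $\pi_{1}$ of the translation subgroup) recorded in the Remarks immediately after the proposition, so the argument closes as you describe.
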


\begin{rems}\hfill\break
1) Let $\T$ be a torus; one has 
$Z(B_{n}(\T))=Z(P_{n}(\mathbb{T}^{2}))=\Z^{2}$ 
(see~\cite{PR1}).

\medskip\noindent
2) Let  $S^{2}$ be a sphere; one has $B_{1}(S^{2})=P_{1}(S^{2})=1$, $P_{2}(S^{2})=1$, 
$B_{2}(S^{2})=\Z/2\Z$ and for $n\geq 3$, $Z(B_{n}(S^{2}))=Z(P_{n}(S^{2}))\approx \Z/2\Z$ (see~\cite{GV}).
\end{rems}

We  can provide  a similar result for framed braids:

\begin{prop}\label{prop:framed braids as mapping classes}
Let $n\geq 1$. Let $\Psi_{n}:\FPMCGB\to\MCGB$ and $\Phi_{n}:\PFPMCGB\to\MCGB$ be the homomorphism induced by 
the maps which forget the tangent vectors and the punctures.
If $(g,b)\notin \{(0,0),(1,0)\}$, $\ker{\Psi_{n}}$ and $\ker{\Phi_{n}}$ are 
respectively isomorphic to $FB_{n}(\Sigma_{g,b})$ and $FP_{n}(\Sigma_{g,b})$.
\end{prop}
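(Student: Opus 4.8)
The plan is to mimic closely the structure of the proof of Proposition~\ref{prop:braids as mapping classes}, since the framed statement is the ``tangent-vector enriched'' analogue of it. The starting point is the fibration-type description: forgetting the $n$ tangent vectors (and their base points) defines a map of diffeomorphism groups $\diff{\Sigma_{g,b},\mathcal{V}}\to\diff{\Sigma_{g,b}}$, and the key is to identify the fibre of the associated map on $\pi_0$. Concretely, I would consider the evaluation fibration
\begin{equation*}
\diff{\Sigma_{g,b},\underline{v}}\to\diff{\Sigma_{g,b}}\to F_{n}(\Sigma_{g,b}),
\end{equation*}
where the second map sends $h$ to the configuration of decorated points $(h(p_i),d_{p_i}h(v_i))_{i}$ in the unit tangent bundle, i.e.\ to a point of $F_{n}(\Sigma_{g,b})$. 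This is the framed refinement of the classical evaluation fibration $\diff{\Sigma_{g,b}}\to C_n(\Sigma_{g,b})$ used to prove the unframed statement; the fibre over the basepoint $\underline{v}$ is exactly $\diff{\Sigma_{g,b},\underline{v}}$, the diffeomorphisms fixing each point and each tangent vector.

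The crucial input is that $\diff{\Sigma_{g,b}}$ is contractible, or at least has contractible identity component, for surfaces with $(g,b)\notin\{(0,0),(1,0)\}$ (this is precisely the hypothesis that rules out the sphere and the torus, where the diffeomorphism group carries extra homotopy coming from the nontrivial centre, cf.\ the Remarks following Proposition~\ref{prop:braids as mapping classes}). Granting contractibility, the long exact homotopy sequence of the fibration collapses and yields an isomorphism
\begin{equation*}
\pi_1\bigl(F_{n}(\Sigma_{g,b}),\underline{v}\bigr)\xrightarrow{\ \cong\ }\pi_0\bigl(\diff{\Sigma_{g,b},\underline{v}}\bigr),
\end{equation*}
that is, $FP_{n}(\Sigma_{g,b})\cong\PFPMCGB$, with the boundary map sending a loop of configurations to the isotopy class of a diffeomorphism dragging the decorated points along it. This identification is exactly the kernel of $\Phi_n$: a class in $\PFPMCGB$ maps to the identity in $\MCGB$ under $\Phi_n$ iff the underlying diffeomorphism of $\Sigma_{g,b}$ (forgetting the decorations) is isotopic to the identity, which by contractibility of $\diff{\Sigma_{g,b}}$ is automatic, so the whole image of the boundary map lands in $\ker{\Phi_n}$ and conversely. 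One then checks that, under the identification of Definition~\ref{defframed}, this boundary isomorphism realises $FP_{n}(\Sigma_{g,b})=\pi_1(F_n(\Sigma_{g,b}))$ as $\ker{\Phi_n}$. The braid (non-pure) case follows by the same fibration with $\diff{\Sigma_{g,b},\underline{v}}$ replaced by $\diff{\Sigma_{g,b},\mathcal{V}}$ and $F_n$ replaced by $\widehat{F_n}(\Sigma_{g,b})$, or equivalently by comparing the two covering sequences~\reqref{framed covering sequence} and~\req{covering sequence} and applying the pure case together with the five lemma.

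I expect the main obstacle to be a careful justification of the fibration and its fibre in the \emph{framed} setting, i.e.\ that the decorated evaluation map $\diff{\Sigma_{g,b}}\to F_n(\Sigma_{g,b})$ really is a fibration with fibre $\diff{\Sigma_{g,b},\underline{v}}$; this requires a framed analogue of the parametrised isotopy extension theorem, showing that one can lift a path in $F_n(\Sigma_{g,b})$ (a motion of points \emph{together with tangent directions}) to an ambient isotopy whose derivative transports the prescribed tangent vectors correctly. The honest way to package this is to factor the decorated evaluation through the ordinary evaluation $\diff{\Sigma_{g,b}}\to C_n(\Sigma_{g,b})$ and analyse the remaining tangent-vector data as a bundle map: over $C_n(\Sigma_{g,b})$ the decorations live in an $(S^1)^n$-bundle (the fibrewise unit tangent spheres), and one tracks how $\pi_0$ of the corresponding diffeomorphism groups interacts with this circle data. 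Once the fibration and the contractibility of $\diff{\Sigma_{g,b}}$ are in hand, the rest is a routine diagram chase identifying kernels with fibres, entirely parallel to the unframed Proposition~\ref{prop:braids as mapping classes}.
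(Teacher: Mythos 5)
Your proposal follows essentially the same route as the paper: the decorated evaluation map $Ev(h)=\bigl(h(p_{i}),d_{p_{i}}h(v_{i})\bigr)_{i}$, which makes $\diff{\Sigma_{g,b}}\to F_{n}(\Sigma_{g,b})$ a locally trivial fibration with fibre $\diff{\Sigma_{g,b},\underline{v}}$, Scott's theorem killing $\pi_{1}\bigl(\diff{\Sigma_{g,b}}\bigr)$, and the long exact homotopy sequence identifying $\pi_{1}\bigl(F_{n}(\Sigma_{g,b})\bigr)$ with $\ker{\Phi_{n}}$ (with $\widehat{F_{n}}(\Sigma_{g,b})$ and $\diff{\Sigma_{g,b},\mathcal{V}}$ for $\Psi_{n}$). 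The only slip is cosmetic: the boundary map is not an isomorphism onto all of $\pi_{0}\bigl(\diff{\Sigma_{g,b},\underline{v}}\bigr)=\PFPMCGB$ (since $\MCGB$ is far from trivial here), but an injection whose image is exactly $\ker{\Phi_{n}}$, as you in fact state correctly afterwards.
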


\begin{proof}
Following~\cite{Bir}, we consider the evaluation map $Ev:\diff{\Sigma_{g,b}}\to F_{n}(\Sigma_{g,b})$ defined by 
$Ev(h)=\bigl(h(p_{i}),d_{p_{i}}h(v_{i})\bigr)$. It is a locally trivial fibering with fiber 
$\diff{\Sigma_{g,b},\underline{v}}$. The long exact sequence of homotopy groups of this fibration gives the 
following exact sequence: 
\begin{multline*}
\cdots\to\pi_{1}\bigl(\diff{\Sigma_{g,b}}\bigr)\stackrel{Ev_{\ast}}{\to}\pi_{1}\bigl(F_{n}(\Sigma_{g,b})\bigr)
\stackrel{\partial_{\ast}}{\to}\pi_{0}\bigl(\diff{\Sigma_{g,b},\underline{v}}\bigr)\\
\stackrel{\Phi_{n}}{\to}\pi_{0}\bigl(\diff{\Sigma_{g,b}}\bigr)
\stackrel{Ev_{\ast}}{\to}\pi_{0}\bigl(F_{n}(\Sigma_{g,b})\bigr).
\end{multline*}
Now, $\pi_{0}\bigl(F_{n}(\Sigma_{g,b})\bigr)$ is trivial, and if $\Sigma_{g,b}$ is not the sphere $S^{2}$ or the torus 
$\mathbb{T}^{2}$, $\diff{\Sigma_{g,b}}$ is contractible (see~\cite{S2}). Thus, we get the required result for 
$\Phi_{n}$.

\noindent
The proof for $\Psi_{n}$ is analogous  and is 
left to the reader; it suffices to consider the evaluation map $\widehat{E_v}:\diff{\Sigma_{g,b}}\to 
\widehat{F_{n}}(\Sigma_{g,b})$ defined by $\widehat{Ev}(h)=\overline{\bigl(h(p_{i}),d_{p_{i}}h(v_{i})\bigr)}$.

\end{proof}

The definition of framed braid groups in terms of mapping classes and tangent vectors provided in \repr{framed braids as mapping classes}
was  introduced in~\cite{OS} as the definition of framed braid groups of surfaces with one boundary component.

\medskip
We can provide an equivalent definition of pure framed braid groups as the kernel of the mapping induced 
by the inclusion of a surface with boundary components into another one.

\begin{cor}\label{cor:capping}
For all $(g,b)$ distinct from $(0,0)$ and $(1,0)$, the kernel of the morphism 
$\lambda_{g,b}^{n}:\Gamma_{g,b+n}\to\Gamma_{g,b}$ induced by capping $n$ boundary components is isomorphic to 
the pure framed braid group $ FP_{n}(\Sigma_{g,b})$.
\end{cor}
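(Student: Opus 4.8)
The plan is to derive this corollary directly from \repr{framed braids as mapping classes} by identifying the group $\Gamma_{g,b+n}$ with the pure framed punctured mapping class group $\PFPMCGB$ and then reinterpreting the morphism $\lambda_{g,b}^{n}$ as the map $\Phi_{n}$. The key observation, already recorded in the Remark preceding the Notations subsection, is that $\PFPMCGB$ and $\Gamma_{g,b+n}$ are isomorphic: a diffeomorphism fixing each $p_{i}$ together with its tangent vector $v_{i}$ is isotopic to one that is the identity on a small disc $D_{i}$ around $p_{i}$, so collapsing each $D_{i}$ to a boundary component realises the isomorphism $\PFPMCGB \cong \Gamma_{g,b+n}$.

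The first step is to make this identification precise at the level of the two forgetful/extension maps. Under the isomorphism $\Gamma_{g,b+n}\cong\PFPMCGB$, the morphism $\lambda_{g,b}^{n}\colon\Gamma_{g,b+n}\to\Gamma_{g,b}$, which extends a diffeomorphism of $\Sigma_{g,b+n}$ by the identity on the $n$ capping discs, should correspond exactly to $\Phi_{n}\colon\PFPMCGB\to\MCGB$, which forgets both the tangent vectors and the punctures. Indeed, given $h\in\diff{\Sigma_{g,b},\underline{v}}$ representing a class in $\PFPMCGB$, one isotopes $h$ to the identity on each disc $D_{i}$; the resulting diffeomorphism of $\Sigma_{g,b}$ viewed as a self-map of the capped surface is precisely the image under $\lambda_{g,b}^{n}$ of the corresponding class in $\Gamma_{g,b+n}$. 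I would verify that this correspondence is compatible with the two isotopy relations, i.e. that it descends to a genuinely commuting diagram relating $\lambda_{g,b}^{n}$ and $\Phi_{n}$.

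Once the square
\[
\begin{CD}
\Gamma_{g,b+n} @>{\lambda_{g,b}^{n}}>> \Gamma_{g,b}\\
@V{\cong}VV @| \\
\PFPMCGB @>{\Phi_{n}}>> \MCGB
\end{CD}
\]
is established, the conclusion is immediate: an isomorphism commuting with the two horizontal maps carries $\ker{\lambda_{g,b}^{n}}$ isomorphically onto $\ker{\Phi_{n}}$, and the latter is identified with $FP_{n}(\Sigma_{g,b})$ by \repr{framed braids as mapping classes} precisely when $(g,b)\notin\{(0,0),(1,0)\}$, which is the hypothesis here. Thus $\ker{\lambda_{g,b}^{n}}\cong FP_{n}(\Sigma_{g,b})$.

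The main obstacle I expect is the careful justification that the vertical isomorphism is compatible with the horizontal maps, since it relies on an isotopy (collapsing each disc) rather than a strict equality of diffeomorphisms; one must check that this choice of isotopy can be made coherently so that the diagram commutes on the nose at the level of mapping classes, and that no subtlety arises from the boundary conditions when $\Sigma_{g,b}$ itself has boundary. The hypothesis $(g,b)\neq(1,0),(0,0)$ enters only through \repr{framed braids as mapping classes} (where it guarantees $\diff{\Sigma_{g,b}}$ is contractible), so once the diagram commutes the corollary follows with no further case analysis.
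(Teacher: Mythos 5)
Your proposal is correct and follows exactly the paper's own argument: the paper's entire proof is the one-line observation that, via the isomorphism $\PFPMCGB\approx\Gamma_{g,b+n}$ from the Remark, the homomorphism $\Phi_{n}$ coincides with $\lambda_{g,b}^{n}$, so the kernel identification follows from \repr{framed braids as mapping classes}. You have simply spelled out the compatibility check that the paper leaves implicit.
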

\begin{proof}
Via the isomorphism $\PFPMCGB\approx\Gamma_{g,b+n}$, the homorphism $\Phi_{n}$ coincides with 
$\lambda_{g,b}^{n}$.
\end{proof}


\section{The (FSPB)  sequence}\label{section:framed}
In this section, we prove the existence of a framed version of the $(SPB)$ sequence. We also prove that it splits even in the case of 
closed surfaces of genus greater or equal than $2$.

\begin{thm}\label{th:framedsequence}
For $g\geq 2$, $b\geq 0$, $n\geq 0$ and $m\geq 0$, one has the following splitting exact sequence :
\begin{equation*}
(FSPB) \qquad 1\to FP_{m}(\Sigma_{g,b+n})\to 
FP_{n+m}(\Sigma_{g,b})\stackrel{\alpha_{n,m}}{\to}FP_{n}(\Sigma_{g,b})\to 1
\end{equation*}
where $\alpha_{n,m}$ consists in forgetting the first $m$ strands.
\end{thm}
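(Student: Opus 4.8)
The plan is to realise the (FSPB) sequence as the homotopy exact sequence of a suitable fibration of framed configuration spaces, in exact parallel with the way the (SPB) sequence arises from the Fadell--Neuwirth fibration $C_{n+m}(\Sigma)\to C_{n}(\Sigma)$. First I would consider the projection $q\colon F_{n+m}(\Sigma_{g,b})\to F_{n}(\Sigma_{g,b})$ that remembers only the last $n$ framed points (the $m$ framed points being forgotten are the ``first'' strands, matching the stated $\alpha_{n,m}$). Since $F_{k}(\Sigma)=(\pi^{k})^{-1}(C_{k}(\Sigma))$ is just the pullback of the unit tangent bundle $U\Sigma$ over each configuration coordinate, this map is a locally trivial fibration whose fibre over a framed configuration $\underline{v}$ of $n$ points is the space of $m$ framed points in $\Sigma_{g,b}$ lying off the $n$ chosen basepoints, i.e.\ $F_{m}(\Sigma_{g,b}\setminus\{p_{1},\dots,p_{n}\})$. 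Removing $n$ interior points from $\Sigma_{g,b}$ is, up to homotopy equivalence of the framed configuration spaces, the same as working on $\Sigma_{g,b+n}$, so the fibre has fundamental group $FP_{m}(\Sigma_{g,b+n})$.

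Next I would write down the long exact sequence in homotopy for this fibration. The base $F_{n}(\Sigma_{g,b})$ has $\pi_{1}=FP_{n}(\Sigma_{g,b})$ by Definition~\ref{defframed}, and the total space has $\pi_{1}=FP_{n+m}(\Sigma_{g,b})$. To excise the boundary terms I need $\pi_{2}\bigl(F_{n}(\Sigma_{g,b})\bigr)=0$ so that $FP_{n+m}\to FP_{n}$ is surjective, and $\pi_{0}$ of the fibre trivial so that the kernel is exactly $\pi_{1}$ of the fibre. Both follow from asphericity: for $g\geq 2$ the surface $\Sigma_{g,b}$ is aspherical and $U\Sigma$ is an aspherical bundle over it, and a standard induction (Fadell--Neuwirth) shows the framed configuration spaces $F_{k}(\Sigma_{g,b})$ are themselves aspherical $K(\pi,1)$'s. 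This yields the short exact sequence
\begin{equation*}
1\to FP_{m}(\Sigma_{g,b+n})\to FP_{n+m}(\Sigma_{g,b})\stackrel{\alpha_{n,m}}{\to}FP_{n}(\Sigma_{g,b})\to 1.
\end{equation*}

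For the splitting, I would exhibit a geometric section of $\alpha_{n,m}$. The hypothesis $g\geq 2$ is what makes this possible even for closed surfaces, where the ordinary (SPB) sequence fails to split. The idea is to push the $m$ forgotten framed strands into a fixed embedded subsurface (for instance a disc or handle region) disjoint from the $n$ retained punctures, where they can be ``parked'' in a standard framed configuration that is carried along rigidly by any motion of the $n$ points. Concretely, given a framed pure braid on the $n$ strands, one extends it to a framed pure braid on $n+m$ strands by letting the $m$ extra framed points sit motionless in this reserved region; because they never interact with the moving $n$ points and their framings are held constant, this defines a homomorphism $s\colon FP_{n}(\Sigma_{g,b})\to FP_{n+m}(\Sigma_{g,b})$ with $\alpha_{n,m}\circ s=\id$.

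The main obstacle is the splitting, not the exactness. On surfaces without boundary there is no free ``infinity'' into which to push the auxiliary strands, and for $g\leq 1$ the centre of the braid group obstructs a section (compare the remarks after Proposition~\ref{prop:braids as mapping classes}); the whole point of requiring $g\geq 2$ is that here one can use the extra handles to build a genuinely global section rather than a merely local one. I would therefore spend most of the argument verifying that the parked framed configuration can be chosen coherently with respect to the basepoint $\underline{v}$ and that the resulting map respects composition of braids, i.e.\ that the reserved region is invariant (up to isotopy rel the $n$ points and their framings) under the entire motion, so that $s$ is well defined on $\pi_{1}$ and not just on paths.
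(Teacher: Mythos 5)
The exactness half of your argument is sound and takes a genuinely different route from the paper's. You run the framed Fadell--Neuwirth fibration $F_{n+m}(\Sigma_{g,b})\to F_{n}(\Sigma_{g,b})$ with fibre $F_{m}(\Sigma_{g,b}\setminus\{p_{1},\dots,p_{n}\})$ and use asphericity of the framed configuration spaces to truncate the homotopy sequence; the paper instead identifies $FP_{n+m}(\Sigma_{g,b})$ with $\ker{\lambda^{n+m}_{g,b}}$ (Corollary~\ref{cor:capping}) and extracts the sequence from a commutative diagram of mapping class groups in which ``forgetting $m$ strands'' becomes ``capping $m$ boundary components''. Both are legitimate; the paper's packaging has the advantage that the splitting then reduces to exhibiting an embedding of subsurfaces.

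The splitting is where your proposal genuinely fails, and it is the part you yourself identify as the main obstacle. On a closed surface there is no disc or handle region that stays disjoint from the $n$ retained strands throughout every motion representing an element of $FP_{n}(\Sigma_{g})$; ``parking the $m$ extra framed points motionless in a reserved region'' is not well defined on homotopy classes, since two representatives avoiding the reserved region need not be homotopic through braids avoiding it. Decisively: if such a construction were well defined it would ignore the framings entirely and hence descend to a section of the unframed (SPB) sequence, which by~\cite{GG} does not split for closed $\Sigma_{g}$, $g\geq 2$, $n\geq 2$ --- a fact the paper itself invokes in Theorem~\ref{th:extension centrale par Zn cas pure}. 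The hypothesis $g\geq 2$ and the ``extra handles'' cannot rescue this. The correct section is of a different nature: it is a \emph{cabling}. The paper embeds $\Sigma_{g,b+n}$ into $\Sigma_{g,b+n+m}$ (Figure~\ref{figure:Section}) so that the $m$ new boundary components lie in a pocket adjacent to the first existing one; equivalently, the $m$ new framed points travel inside a small disc around the \emph{first retained framed point}, and it is exactly the framing of that point which trivialises this disc along the motion and makes the transported configuration well defined on $\pi_{1}$. That is why (FSPB) splits while (SPB) does not: the framing is essential to the construction of the section, not incidental to it.
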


\begin{proof}
Using Proposition~\ref{prop:framed braids as mapping classes} and Corollary~\ref{cor:capping}, one has the following commutative 
diagram with exact rows and columns:
$$
\xymatrix{%
&      & 1 \ar[d]                                 & 1 \ar[d]                    & \\
1\ar[r]& FP_{m}(\Sigma_{g,b+n}) \ar[r] \ar@{=}[d] & FP_{n+m}(\Sigma_{g,b}) \ar[r]^{(\lambda^{m}_{g,b+n})_{\mid}} 
\ar[d] & FP_{n}(\Sigma_{g,b}) \ar[r] \ar[d] & 1\\
1 \ar[r] & FP_{m}(\Sigma_{g,b+n}) \ar[r] & \Gamma_{g,b+n+m} \ar[d]^{\lambda^{n+m}_{g,b}}\ar[r]^{\lambda^{m}_{g,b+n}} 
& \Gamma_{g,b+n} \ar[d]^{\lambda^{n}_{g,b}} \ar[r] & 1\\
&  & \Gamma_{g,b} \ar@{=}[r]\ar[d] & \Gamma_{g,b}  \ar[d]\\
& & 1 & 1 &
}$$
where $(\lambda^{m}_{g,b+n})_{\mid}$ is the restriction of 
$\lambda^{m}_{g,b+n}$ to $FP_{n+m}(\Sigma_{g,b})$. Since, via the 
isomorphism $FP_{n+m}(\Sigma_{g,b})\approx\ker{\lambda^{n+m}_{g,b}}$, 
forgetting $m$ strands corresponds to capping $m$ boundary 
components, the first row is the required exact sequence.\\
The splitting of the (FSBP) sequence is obtained considering an embedding  $\iota$ of $\Sigma_{g,b+n}$ into 
$\Sigma_{g,b+n+m}$ as in~figure~\ref{figure:Section}. The induced morphism  
$\iota_{*}:\Gamma_{g,b+n}\to\Gamma_{g,b+n+m}$ satisfies 
$\lambda^{m}_{g,b+n}\circ\iota_*=\mathrm{Id}_{\Gamma_{g,b+n}}$  
and its restriction $\iota_{* \mid}:FP_{n}(\Sigma_{g,b})\to  FP_{n+m}(\Sigma_{g,b})$
is a section for $(\lambda^{m}_{g,b+n})_{\mid}$.

\begin{figure}[h]
\begin{center}
\psfig{figure=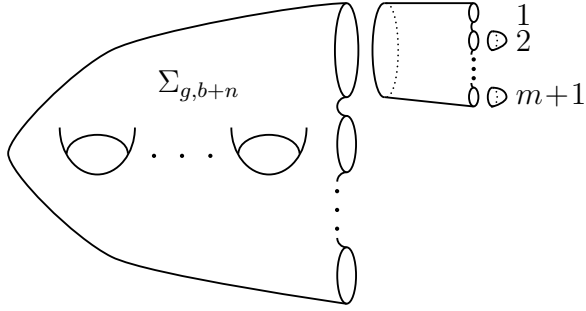}
\caption{\label{figure:Section} $\iota$ is a section of 
$\lambda_{b+n}^{m}$.}
\end{center}
\end{figure}

\end{proof} 

\begin{rem}
Note that the splitting of (FSPB) sequence consists in cabling the first framed strand while the splitting of (SPB) sequence in the case of surfaces with boundary 
consists in  adding $m$ strands  at the infinity. If $\Sigma$ has boundary, adding $m$ framed strands at the infinity gives another 
splitting of (FSPB) sequence. 
\end{rem}


\section{Framed braids vs classical braids}\label{section:classictoframed}

The following Theorem shows that the group $FP_{n}(\Sigma_{g})$ is a non trivial central extension 
of $P_{n}(\Sigma_{g})$ by $\Z^{n}$.

\begin{thm} \label{th:extension centrale par Zn cas pure}
Let $\Sigma_{g,b}$ a surface of genus $g\ge2$ with $b$ boundary components.

\noindent 1) If $\Sigma_{g,b}$ has boundary, the framed pure braid group 
$FP_{n}(\Sigma_{g,b})$ is isomorphic to $\Z^{n}\times P_{n}(\Sigma_{g,b})$.\\
2) If $\Sigma_{g}$ is a closed surface, there is a non-splitting central extension
\begin{equation}\label{eq:pure framed braid and pure braid}
 1\to \Z^{n}\to FP_{n}(\Sigma_{g})\stackrel{\beta_{n}}{\to}  P_{n}(\Sigma_{g})\to 1
\end{equation}
where $\beta_{n}$ is the morphism induced by the projection map $F_{n}(\Sigma_{g})\to C_{n}(\Sigma_{g})$ 
({\em i.e.} $\beta_{n}$ consists in forgetting the framing).\\
3) In the two cases, $F_{n}(\Sigma_{g,b})$ is an Eilenberg-Maclane space of type $(FP_{n}(\Sigma_{g,b}),1)$.
\end{thm}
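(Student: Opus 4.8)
The plan is to establish the three parts by exploiting the structure of the unit tangent bundle $U\Sigma_{g,b}$ and the projection $\pi:F_{n}(\Sigma_{g,b})\to C_{n}(\Sigma_{g,b})$ whose induced map on fundamental groups is $\beta_{n}$. Since $\pi$ restricts over each configuration point to the circle fibre of $U\Sigma_{g,b}$, the fibre of $F_{n}(\Sigma_{g,b})\to C_{n}(\Sigma_{g,b})$ is the $n$-torus $(S^{1})^{n}$, and the long exact homotopy sequence of this fibration will give
\begin{equation*}
\cdots\to\pi_{2}(C_{n}(\Sigma_{g,b}))\to\pi_{1}((S^{1})^{n})\to FP_{n}(\Sigma_{g,b})\stackrel{\beta_{n}}{\to} P_{n}(\Sigma_{g,b})\to\pi_{0}((S^{1})^{n}).
\end{equation*}
The first thing I would verify is that $C_{n}(\Sigma_{g,b})$ is aspherical for $g\geq 2$, so that $\pi_{2}(C_{n}(\Sigma_{g,b}))=0$; this makes the sequence reduce to the desired short exact sequence $1\to\Z^{n}\to FP_{n}(\Sigma_{g,b})\stackrel{\beta_{n}}{\to}P_{n}(\Sigma_{g,b})\to 1$, with kernel $\pi_{1}((S^{1})^{n})=\Z^{n}$. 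Centrality of the $\Z^{n}$ should follow because each generator is a loop in a single torus fibre, which can be pushed around and is not linked with the base motions; more carefully, one argues that the fibre inclusion lands in the centre because rotating a framing vector commutes (up to homotopy rel the configuration) with any braiding of the strands.

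For part (1), when $\Sigma_{g,b}$ has nonempty boundary the unit tangent bundle is trivial, $U\Sigma_{g,b}\cong\Sigma_{g,b}\times S^{1}$, because an orientable surface with boundary is homotopy equivalent to a wedge of circles and its tangent bundle admits a nowhere-zero section (equivalently $\Sigma_{g,b}$ has a global framing). This trivialisation gives a global continuous choice of framing, which I would use to split the projection $\pi$: the section sends a configuration to the same configuration equipped with the trivialised framing, and the framing rotations then constitute a direct factor. Hence $F_{n}(\Sigma_{g,b})\cong C_{n}(\Sigma_{g,b})\times(S^{1})^{n}$ as spaces, and on $\pi_{1}$ this yields $FP_{n}(\Sigma_{g,b})\cong\Z^{n}\times P_{n}(\Sigma_{g,b})$ directly.

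For part (2), the closed case, the bundle $U\Sigma_{g}$ is nontrivial (its Euler number is $2-2g\neq 0$), so no global framing section exists and the extension need not split. To prove it is genuinely non-split I would pass to a single strand first: for $n=1$, $F_{1}(\Sigma_{g})=U\Sigma_{g}$ is the unit tangent bundle, whose fundamental group is the well-known central extension $1\to\Z\to\pi_{1}(U\Sigma_{g})\to\pi_{1}(\Sigma_{g})\to 1$ with Euler class $2-2g$; this is non-split precisely because $2-2g\neq 0$. For general $n$ I would argue that a splitting of \eqref{eq:pure framed braid and pure braid} would restrict, via the forgetful maps $FP_{n}(\Sigma_{g})\to FP_{1}(\Sigma_{g})$ covering $P_{n}(\Sigma_{g})\to P_{1}(\Sigma_{g})=\pi_{1}(\Sigma_{g})$, to a splitting of the $n=1$ extension, contradicting non-triviality of the Euler class. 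The main obstacle is making this restriction argument precise: I must check that forgetting strands is compatible with both projections and carries a hypothetical section to a genuine section of the single-strand extension, which requires the functoriality of $\beta_{n}$ under the forgetful maps. Part (3) then follows uniformly in both cases: since $(S^{1})^{n}$ and (for $g\geq 2$) $C_{n}(\Sigma_{g,b})$ are both aspherical, the fibration $F_{n}(\Sigma_{g,b})\to C_{n}(\Sigma_{g,b})$ with aspherical fibre and base forces $\pi_{k}(F_{n}(\Sigma_{g,b}))=0$ for $k\geq 2$ by the long exact sequence, so $F_{n}(\Sigma_{g,b})$ is a $K(FP_{n}(\Sigma_{g,b}),1)$.
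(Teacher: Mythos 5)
Your treatment of part (1) (parallelizability of $\Sigma_{g,b}$ when $b\geq 1$, hence $F_{n}(\Sigma_{g,b})\cong (S^{1})^{n}\times C_{n}(\Sigma_{g,b})$), of part (3), and of the existence of the central extension in part (2) via the long exact homotopy sequence of the $(S^{1})^{n}$-fibration is essentially the paper's argument. For centrality the paper works instead in the mapping class group picture, where $\Z^{n}$ is generated by Dehn twists along boundary-parallel curves, which are central in $\Gamma_{g,n}$; your ``rotating the framing commutes with any braiding'' is the observation that $F_{n}(\Sigma_{g})\to C_{n}(\Sigma_{g})$ is a principal $(S^{1})^{n}$-bundle, and either justification is acceptable.

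The non-splitting step is where you diverge, and as written it has a gap. Composing a hypothetical section $s_{n}$ of $\beta_{n}$ with the forgetful map $q:FP_{n}(\Sigma_{g})\to FP_{1}(\Sigma_{g})$ only produces a homomorphism $q\circ s_{n}:P_{n}(\Sigma_{g})\to FP_{1}(\Sigma_{g})$ lifting $p:P_{n}(\Sigma_{g})\to P_{1}(\Sigma_{g})$ through $\beta_{1}$; it is not a section of $\beta_{1}$, and there is nothing to ``restrict'' to, since $P_{1}(\Sigma_{g})$ is not canonically a subgroup of $P_{n}(\Sigma_{g})$. To convert it into a section of $\beta_{1}$ you must precompose with a section $\sigma$ of $p$, i.e.\ invoke the splitting of the (SPB) sequence over $P_{1}(\Sigma_{g})$ for closed surfaces --- a nontrivial theorem of \cite{GG,BB} cited in the introduction --- and not merely the ``functoriality of $\beta_{n}$'' you identify as the only obstacle. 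With that input the computation $\beta_{1}\circ q\circ s_{n}\circ\sigma=p\circ\beta_{n}\circ s_{n}\circ\sigma=p\circ\sigma=\mathrm{Id}$ does go through, and your Euler-class argument for $n=1$ finishes the proof. The paper argues in the opposite direction: for $n\geq 2$ it uses the cabling section $\iota_{*}$ of $\lambda_{g,n}^{1}$ to turn a section of $\beta_{n}$ into a section of the strand-forgetting map $p_{*}:P_{n+1}(\Sigma_{g})\to P_{n}(\Sigma_{g})$, contradicting the non-splitting of (SPB) for $n\geq 2$ proved in \cite{GG}. Both routes lean on the same reference; yours uses the positive ($n=1$) half of that splitting theorem together with the Euler class of $U\Sigma_{g}$, while the paper's uses the negative ($n\geq 2$) half together with the geometric cabling section.
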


\begin{proof}
If $\Sigma_{g,b}$ has boundary, $\Sigma_{g,b}$ is parallelizable and
the unit tangent bundle $U\Sigma_{g,b}$ is homeomorphic to $S^{1}\times\Sigma_{g,b}$. Thus, 
$F_{n}(\Sigma_{g,b})$ is homeomorphic to $(S^{1})^{n}\times C_{n}(\Sigma_{g,b})$ and $FP_{n}(\Sigma_{g,b})$ 
is isomorphic to $\Z^{n}\times P_{n}(\Sigma_{g,b})$. Furthermore, since $S^{1}$ and $C_{n}(\Sigma_{g,b})$ are 
Eilenberg-Maclane spaces (see~\cite{PR1}), $F_{n}(\Sigma_{g,b})$ is also one.

Now, suppose that $\Sigma_{g}$ is a closed surface of genus greater than $1$ and consider the exact sequence of 
homotopy groups of the locally trivial fibrations (with fiber
$(S^{1})^{n}$) $F_{n}(\Sigma_{g})\to C_{n}(\Sigma_{g})$ :
\setlength{\multlinegap}{0mm}
\begin{multline*}
\cdots\to\pi_{n}\bigl((S^{1})^{n}\bigr)\to\pi_{n}\bigl(F_{n}(\Sigma_{g,b})\bigr)\to\pi_{n}\bigl(
C_{n}(\Sigma_{g,b})\bigr)\to
\pi_{n-1}\bigl((S^{1})^{n}\bigr)\to\cdots\\ \cdots\to\pi_{2}\bigl( 
C_{n}(\Sigma_{g,b})\bigr)\to\pi_{1}\bigl((S^{1})^{n}\bigr)\to\pi_{1}\bigl(
F_{n}(\Sigma_{g,b})\bigr)\stackrel{\beta_{n}}{\to}\pi_{1}\bigl( C_{n}(\Sigma_{g,b})\bigr)\to 1
\end{multline*}
Since $C_{n}(\Sigma_{g})$ is an Eilenberg-Maclane spaces (see~\cite{PR1}), this sequence leads to the 
sequence (\ref{eq:pure framed braid and pure braid}) and to the third point of the Theorem.\\
Now, using Propositions~\ref{prop:braids as mapping classes} and~\ref{prop:framed braids as mapping 
classes} one has the following commutative diagram with exact rows and columns: 
\addtocounter{equation}{1}
$$
\xymatrix{%
&&1 \ar[d]  & 1 \ar[d]& \\
1 \ar[r] & \Z^{n} \ar[r]\ar@{=}[d] & FP_{n}(\Sigma_{g}) \ar[r] \ar[d] & P_{n}(\Sigma_{g}) \ar[r] \ar[d] &
1 \ \ \ \ \ \ \ \ \ 
(\ref{eq:pure framed braid and pure braid})\\
1 \ar[r] & \Z^{n} \ar[r] &\Gamma_{g,n} \ar[d]^{\Phi_{n}}  \ar[r] & \mathrm{P}\Gamma_{g}^{n} 
\ar[d]^{\varphi_{n}} \ar[r] & 1 \ \ \ \ \ \ \ \ \ (\arabic{equation})\\
&& \Gamma_{g} \ar@{=}[r]\ar[d] & \Gamma_{g}\ar[d]\\
&& 1  & 1
}$$
where the exact sequence~(\arabic{equation}) is obtained by capping 
$n$ boundary components of $\Sigma_{g,n}$ by $n$ punctured discs 
(see~\cite{PR2}). In this sequence, $\Z^{n}$ can be seen as a 
subgroup of $\Gamma_{g,n}$ generated by Dehn twists along curves 
parallel to boundary components, thus~(\arabic{equation}) is a 
central extension of $\mathrm{P}\Gamma_{g}^{n}$. 
Therefore,~(\ref{eq:pure framed braid and pure braid}) is a central 
extension of $P_{n}(\Sigma_{g})$.\\
To conclude, we shall prove that this sequence does not split. First we remark that for $n=1$ we obtain the 
canonical projection of the fundamental group of the unit tangent space into the fundamental group of the 
surface which  splits if and only if the surface is not closed (we are considering the case $g\ge2$). 
Thus, let $n\ge2$  and let us consider the following diagram:

$$
\xymatrix{%
1 \ar[r] & \Z^{n} \ar[r]\ar@{=}[d] & FP_{n+1}(\Sigma_g) \ar[r]^{\beta_{n+1}} \ar@<2pt>[d]^{(\lambda_{g,n}^{1})_{\mid}}  &
P_{n+1}(\Sigma_g) \ar[r] \ar[d]^{p_\ast} & 1 \\
1 \ar[r] & \Z^{n} \ar[r]           & FP_{n}(\Sigma_{g}) \ar[r]^{\beta_{n}}  \ar@<2pt>[u]^{(\iota_\ast)_{\mid}} &
P_{n}(\Sigma_{g}) \ar[r]                 & 1
}
$$
where $\iota_\ast$ is the section of $\lambda_{g,n}^{1}$ described in 
figure~\ref{figure:Section} and $p_\ast$ the map wich consists in forgetting one strand. One can easily verify that the 
diagram is commutative.  Suppose that there is a section $s_{n}: P_{n}(\Sigma_g) \to FP_{n}(\Sigma_{g})$ for 
$\beta_{n}$. 

The composition $\beta_{n+1} \circ \iota_\ast \circ
s_{n}: P_{n}(\Sigma_{g})\to P_{n+1}(\Sigma_g) $ is therefore a section for $p_\ast$: in fact $p_\ast \circ\beta_{n+1} \circ
\iota_\ast \circ s_{n}
= \beta_{n} \circ \lambda_{g}^{n}  \circ \iota_\ast \circ s_{n}= Id_{P_{n}(\Sigma_{g})}$. This is impossible,
since
$p_\ast$ has no section when $n\ge2$ and  $g>1$~\cite{GG}.
\end{proof}

\begin{rem}
The splitting of sequence~(\ref{eq:pure framed 
braid and pure braid}) in the case of surfaces with boundary was proven  in Lemma 19 of \cite{BGG} in a
combinatorial way. 
\end{rem}

\begin{prop} \label{prop:extension centrale par Zn}
Let $\Sigma_{g,b}$ be a surface different from the sphere $S^{2}$ and the torus $\T$.\\
1) there is an exact sequence
\begin{equation}\label{eq:framed braid and braid}
1\to \Z^{n}\to FB_{n}(\Sigma_{g,b})\stackrel{\widehat{\beta}_{n}}{\to}  B_{n}(\Sigma_{g,b})\to 1\; ,
\end{equation}
where $\widehat{\beta}_{n}$ consists in forgetting the framing. This sequence 
 splits if $\Sigma_{g,b}$ has non-empty boundary.\\
2) $\widehat{F_{n}}(\Sigma_{g,b})$ is an Eilenberg-Maclane space of type $(FB_{n}(\Sigma_{g,b}),1)$.
\end{prop}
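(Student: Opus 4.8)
The plan is to mirror the structure used for the pure case in \reth{extension centrale par Zn cas pure}, but now allowing the strands to be permuted. The key observation is that both $FB_n(\Sigma_{g,b})$ and $B_n(\Sigma_{g,b})$ are defined via the quotient by the free $\sn$-action, and the projection map $F_{n}(\Sigma_{g,b})\to C_{n}(\Sigma_{g,b})$ is $\sn$-equivariant. First I would note that, since $\Sigma_{g,b}$ is not the sphere or the torus, the unit tangent bundle restricted over $C_n$ gives a locally trivial fibration with fiber $(S^1)^n$, and this descends to a fibration $\widehat{F_{n}}(\Sigma_{g,b})\to\widehat{C_{n}}(\Sigma_{g,b})$ whose fiber is again $(S^1)^n$. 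Taking the long exact sequence of homotopy groups of this fibration, and using that $\widehat{C_{n}}(\Sigma_{g,b})$ is an Eilenberg--MacLane space of type $(B_n(\Sigma_{g,b}),1)$ (which follows from the Eilenberg--MacLane property established in the pure case, since $\widehat{C_n}$ is covered by $C_n$ which is aspherical), I would obtain exactly the sequence~\reqref{framed braid and braid} together with the second assertion that $\widehat{F_n}(\Sigma_{g,b})$ is aspherical.

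For the identification of the kernel as $\Z^n$, I would argue that the fiber inclusion $(S^1)^n\hookrightarrow\widehat{F_n}(\Sigma_{g,b})$ induces an injection on $\pi_1$. This is because the covering $F_n\to\widehat{F_n}$ restricts to the trivial covering on each fiber copy of $(S^1)^n$ (the $\sn$-action permutes the coordinates but each individual $S^1$ factor maps homeomorphically), so the fiber $\pi_1$ of the downstairs fibration coincides with that of the upstairs fibration; from \reth{extension centrale par Zn cas pure} we know $\pi_2(\widehat{C_n})=\pi_2(C_n)=0$ forces the relevant connecting map to vanish, giving the injection $\Z^n\hookrightarrow FB_n(\Sigma_{g,b})$.

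For the splitting in the boundary case, I would exploit parallelizability exactly as in the pure case. When $\Sigma_{g,b}$ has non-empty boundary, $U\Sigma_{g,b}\cong S^1\times\Sigma_{g,b}$, so $F_n(\Sigma_{g,b})\cong(S^1)^n\times C_n(\Sigma_{g,b})$ as an $\sn$-space, where $\sn$ acts by simultaneously permuting the $(S^1)^n$ factors and the $C_n$ coordinates diagonally. Passing to the quotient, $\widehat{F_n}(\Sigma_{g,b})$ is the total space of a bundle over $\widehat{C_n}(\Sigma_{g,b})$ with fiber $(S^1)^n$; the trivialization provides a global framing that is preserved under the permutation action, yielding a splitting $\widehat{\beta}_n$ that records the framing independently of the underlying braid. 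Concretely, one obtains a homomorphism $FB_n(\Sigma_{g,b})\to\Z^n$ by reading off the total winding of the framing, and this splits the sequence.

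The main obstacle I expect is verifying that the $\sn$-action on the fiber $(S^1)^n$ is compatible with the framing in a way that makes the winding-number map $\sn$-invariant and hence well-defined on the quotient: a priori the splitting homomorphism to $\Z^n$ must be replaced by one valued in the coinvariants or must be checked to descend correctly, since permuting strands permutes the $S^1$ factors. The resolution is that the total framing defect lies in the $\sn$-invariant part, and because the parallelization is global the framing record transforms correctly; making this precise requires care but presents no genuine difficulty, unlike the closed case where no such trivialization exists and the central $\Z^n$ is genuinely obstructed.
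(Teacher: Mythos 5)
Your argument for the exact sequence and for part 2) follows the paper's route exactly: descend the $(S^{1})^{n}$-fibration to $\widehat{F_{n}}(\Sigma_{g,b})\to\widehat{C_{n}}(\Sigma_{g,b})$ via the $\sn$-equivariance, take the long exact homotopy sequence, and use that $\widehat{C_{n}}(\Sigma_{g,b})$ is aspherical (your observation that asphericity passes down from the covering space $C_{n}(\Sigma_{g,b})$ is a perfectly good substitute for the paper's citation). That part is fine.

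The splitting argument, however, contains a genuine error. You propose to split the sequence by a homomorphism $FB_{n}(\Sigma_{g,b})\to\Z^{n}$ ``reading off the total winding of the framing'', i.e.\ by a retraction onto the kernel. No such retraction can exist for $n\geq 2$: a retraction of a short exact sequence $1\to N\to G\to Q\to 1$ onto an abelian $N$ forces $G\cong N\times Q$ and hence $N$ central in $G$, but the subgroup $\Z^{n}$ here is \emph{not} central in $FB_{n}(\Sigma_{g,b})$ --- conjugation by a braid with underlying permutation $\sigma$ sends the $i$-th framing generator to the $\sigma(i)$-th one. This is exactly the difficulty you flag in your last paragraph, but the resolution you sketch (``the total framing defect lies in the $\sn$-invariant part'') does not repair it; at best the \emph{sum} of the windings gives a homomorphism to $\Z$, not to $\Z^{n}$. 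What is actually needed, and what the paper does, is a section of $\widehat{\beta}_{n}$ in the other direction: the parallelization of $\Sigma_{g,b}$ gives an $\sn$-equivariant section $s\colon C_{n}(\Sigma_{g,b})\to F_{n}(\Sigma_{g,b})$ of the bundle (attach the constant unit vector to each point), which descends to a section $\hat{s}\colon\widehat{C_{n}}(\Sigma_{g,b})\to\widehat{F_{n}}(\Sigma_{g,b})$, and $\hat{s}_{*}$ on fundamental groups is the desired group-theoretic section $B_{n}(\Sigma_{g,b})\to FB_{n}(\Sigma_{g,b})$. With this correction the equivariance issue you worried about disappears, since the constant framing is manifestly preserved by permuting the points.
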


\begin{proof} Using local trivialisations of the fibering $F_{n}(\Sigma_{g,b})\to C_{n}(\Sigma_{g,b})$ and
the covering 
spaces $C_{n}(\Sigma_{g,b})\to\widehat{C_{n}}(\Sigma_{g,b})$ and
$F_{n}(\Sigma_{g,b})\to\widehat{F_{n}}(\Sigma_{g,b})$, one can 
easily see that $\widehat{F_{n}}(\Sigma_{g,b})\to \widehat{C_{n}}(\Sigma_{g,b})$ is a locally trivial bundle
with 
fibre $(S^{1})^{n}$. The exact sequence of homotopy groups of this fibration is
\setlength{\multlinegap}{0mm}
\begin{multline*}
\cdots\to\pi_{n}\bigl((S^{1})^{n}\bigr)\to\pi_{n}\bigl(\widehat{F_{n}}(\Sigma_{g,b})\bigr)\to
\pi_{n}\bigl(\widehat{C_{n}}(\Sigma_{g,b})\bigr)\to\pi_{n-1}\bigl((S^{1})^{n}\bigr)\to\cdots\\
\cdots\to\pi_{2}\bigl( 
\widehat{C_{n}}(\Sigma_{g,b})\bigr)\to\pi_{1}\bigl((S^{1})^{n}\bigr)\to\pi_{1}\bigl(
\widehat{F_{n}}(\Sigma_{g,b})\bigr) \stackrel{\widehat{\beta}_{n}}{\to} \pi_{1}\bigl(\widehat{C_{n}}(\Sigma_{g,b})\bigr)\to 1
\end{multline*}
Since $\widehat{C_{n}}(\Sigma_{g,b})$ is an Eilenberg-Maclane space (see~\cite{PR1}), we get the
required 
exact sequence and the second point of the proposition. If $\Sigma_{g,b}$ is parallelizable, any section
$s:C_{n}(\Sigma_{g,b})\to 
F_{n}(\Sigma_{g,b})$ induces a section
$\hat{s}:\widehat{C_{n}}(\Sigma_{g,b})\to\widehat{F_{n}}(\Sigma_{g,b})$ which gives the 
splitting of~(\ref{eq:framed braid and braid}).
\end{proof}


\section{Presentations of pure framed braid groups}\label{section:presframed}

Now, let us look for a presentation of pure framed braid groups. By~\reth{extension centrale par Zn cas pure}, 
the group $FP_{n}(\Sigma_{g,b})$ is isomorphic to the direct product $\Z^{n}\times P_{n}(\Sigma_{g,b})$ if 
$b\geq 1$. Thus, with the following theorem (see~\cite{B}), we get a 
presentation of $FP_{n}(\Sigma_{g,b})$.

\begin{thm} \label{th:purepres}
Let $\Sigma_{g,b}$ be a compact, connected, orientable surface
of genus $g\ge 1$ with $b$ boundary components, $b\geq 1$. The group $P_n(\Sigma_{g,b})$ admits the following
presentation:
\begin{enumerate}
\item[\textbf{Generators:}] $\{A_{i,j}\; | \;1 \le i \le 2g+b+n-2, 2g 
+b\le j \le 2g+b+n-1, i<j \}.$
\item[\textbf{Relations:}]
\begin{multline*}
       \text{(PR1)}\ A_{i,j}^{-1} A_{r,s} A_{i,j} = A_{r,s}\ \mbox{ 
       if } \, \,(i<j<r<s) \; \mbox{ or }\ (r+1<i<j<s),\\
\shoveright{\mbox{or}\ (i=r+1<j<s\ \mbox{ for even }\ r<2g\ \mbox{ or }\ 
r>2g )\,;}
\\
\shoveleft{\text{(PR2)}\  A_{i,j}^{-1} A_{j,s} A_{i,j} = A_{i,s} 
A_{j,s} A_{i,s}^{-1}\ \mbox{ if }\ (i<j<s)\,;}\\
\shoveleft{\text{(PR3)}\ A_{i,j}^{-1} A_{i,s} A_{i,j} = A_{i,s} 
A_{j,s} A_{i,s} A_{j,s}^{-1} A_{i,s}^{-1}\ \mbox{ if }\ (i<j<s)\,;}\\
\shoveleft{\text{(PR4)}\ A_{i,j}^{-1}A_{r,s}A_{i,j}=A_{i,s}A_{j,s}A_{i,s}^{-1}A_{j,s}^{-1}A_{r,s}
A_{j,s}A_{i,s}A_{j,s}^{-1}A_{i,s}^{-1}}\ \mbox{ if }\ (i+1<r<j<s)\\
\shoveright{\mbox{or }\ (i+1=r<j<s\ \mbox{ for odd }\ r<2g\ \mbox{ or }\ r>2g )\, ;}\\
\shoveleft{\text{(ER1)}\ A_{r+1,j}^{-1} A_{r,s} A_{r+1,j}=A_{r,s} 
A_{r+1,s} A_{j,s}^{-1} A_{r+1,s}^{-1}\ \mbox{ if }\ r\mbox{ odd and }\ r<2g \, ;} \\
\shoveleft{\text{(ER2)}\ A_{r-1,j}^{-1} A_{r,s} A_{r-1,j}= A_{r-1,s} A_{j,s} A_{r-1,s}^{-1} A_{r,s} A_{j,s}
A_{r-1,s} A_{j,s}^{-1}A_{r-1,s}^{-1}}\\
\shoveright{\mbox{if }\ r\mbox{ even and }\ r<2g\, .} \\
\end{multline*}
\end{enumerate}
\end{thm}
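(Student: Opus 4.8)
The plan is to obtain the presentation by induction on the number $n$ of strands, exploiting the fact that, for a surface with boundary, the (SPB) sequence splits. Taking $m=1$ in that sequence, the Fadell--Neuwirth fibration (see~\cite{FN}) gives for each $k\ge 0$ a \emph{split} short exact sequence
$$1\to \pi_1\bigl(\Sigma_{g,b}\setminus\{k\ \text{points}\}\bigr)\to P_{k+1}(\Sigma_{g,b})\to P_k(\Sigma_{g,b})\to 1,$$
a section being given by adjoining a strand near the boundary. Since $\Sigma_{g,b}$ has boundary, the fibre group $\pi_1(\Sigma_{g,b}\setminus\{k\ \text{points}\})$ is \emph{free} of rank $2g+b-1+k$, and I would fix as free basis the elements $A_{i,\,2g+b+k}$ with $1\le i\le 2g+b+k-1$: the indices $i\le 2g+b-1$ label loops around the handle and boundary generators of the surface, while $2g+b\le i\le 2g+b+k-1$ label loops encircling the $k$ strands already present.

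The base case $n=1$ is immediate, as $P_1(\Sigma_{g,b})=\pi_1(\Sigma_{g,b})$ is free on the $2g+b-1$ generators $A_{i,\,2g+b}$ and, for a single value of the second index, none of the relations (PR1)--(ER2) is active. For the inductive step I would use the standard presentation of a split extension: if $N$ is free on $X$ and $Q=\langle Y\mid S\rangle$, then $N\rtimes Q$ has presentation $\langle X\cup Y\mid S,\ \ y\,x\,y^{-1}=\phi(y)(x)\ \ (x\in X,\ y\in Y)\rangle$, where $\phi$ is the conjugation action and each $\phi(y)(x)$ is rewritten as a word in $X$. Applying this with $N=\pi_1(\Sigma_{g,b}\setminus\{(n-1)\ \text{points}\})$, free on $\{A_{i,s}:1\le i\le s-1\}$ for $s=2g+b+n-1$, and with $Q=P_{n-1}(\Sigma_{g,b})$ presented by induction, the only new relations are the conjugations $A_{i,j}^{-1}A_{r,s}A_{i,j}=\phi(A_{i,j})(A_{r,s})$ with $j<s$. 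The assertion of the theorem is precisely that these conjugation words are the ones displayed in (PR1)--(PR4) and (ER1)--(ER2) with top index $s$; together with the inductively assumed relations (the same families with smaller top index), they yield the complete list, and no relations among the $A_{i,s}$ survive because the fibre is free.

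The heart of the matter --- and the step I expect to be the main obstacle --- is the explicit determination of the action $\phi(A_{i,j})(A_{r,s})$, i.e.\ of how dragging the last strand past the loop or strand indexed by $(i,j)$ rewrites the generator $A_{r,s}$. This is a geometric computation on the punctured surface: one represents $A_{r,s}$ by a concrete based loop, pushes it across the annular band swept out during the motion coded by $A_{i,j}$, and reads off the resulting word, keeping careful track of orientations. The three regimes distinguished in (PR1), (PR4), (ER1) and (ER2) --- ``$r$ odd, $r<2g$'', ``$r$ even, $r<2g$'' and ``$r>2g$'' --- arise from the intersection pattern of the chosen curves: the pair $a_k,b_k$ bounding a handle meet transversally and force the longer commutator corrections recorded by the entanglement relations (ER1) and (ER2), whereas boundary-type generators can be taken disjoint and produce only the simpler braid-type conjugations (PR1)--(PR4). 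Once every such word is computed and each listed relation is checked to hold in $P_n(\Sigma_{g,b})$ by exhibiting the corresponding isotopy, the extension lemma guarantees that these relations suffice, closing the induction. This is, in outline, the argument of~\cite{B}.
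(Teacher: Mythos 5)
The paper does not prove this theorem: it is quoted verbatim from~\cite{B} (``with the following theorem (see~\cite{B})\ldots''), so there is no internal proof to compare against. Your outline is, as you yourself note, a faithful reconstruction of the strategy of the cited source: induction on $n$ via the split Fadell--Neuwirth sequence with $m=1$, the free fibre $\pi_1(\Sigma_{g,b}\setminus\{k\ \text{points}\})$ of rank $2g+b-1+k$ with basis the $A_{i,2g+b+k}$, the standard presentation of a split extension, and the identification of the conjugation words with the right-hand sides of (PR1)--(PR4), (ER1)--(ER2). The bookkeeping is consistent (the generator count for the top index $s$ matches the free rank, the base case $n=1$ is genuinely vacuous for the relations, and the split-extension lemma is applied correctly, with the section guaranteeing that the lifted relators of $P_{n-1}(\Sigma_{g,b})$ hold on the nose).

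The one genuine shortfall is that the decisive step is described rather than executed: the entire specific content of the theorem --- why the conjugate $A_{i,j}^{-1}A_{r,s}A_{i,j}$ equals \emph{exactly} the displayed word in each of the case distinctions, including the parity splits at $r<2g$ and the special entanglement relations (ER1)--(ER2) --- lives in the geometric computation of $\phi(A_{i,j})(A_{r,s})$, which you defer. Without at least one worked case (say the commutator correction forced by the transverse intersection of a handle pair, which produces (ER1)), the argument establishes only that \emph{some} presentation of this shape exists, not that the listed words are the correct ones. You should also make explicit the minor point that the section ``adjoining a strand near the boundary'' carries the generator $A_{i,j}$ of $P_{n-1}(\Sigma_{g,b})$ to the element of $P_n(\Sigma_{g,b})$ bearing the same name, so that the inductively assumed relations really are the families with smaller top index. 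As a roadmap to the proof in~\cite{B} the proposal is accurate; as a self-contained proof it is incomplete at its central step.
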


As a representative of the generator $A_{i,j}$, we may take a
geometric braid whose only non-trivial (non-vertical) strand is the $(j-2g-b+1)$th one. In 
Figure~\ref{generateur}, we illustrate the projection of such braids on the surface $\Sigma_{g,1}$. 
\begin{figure}[h]
\begin{center}
\psfig{figure=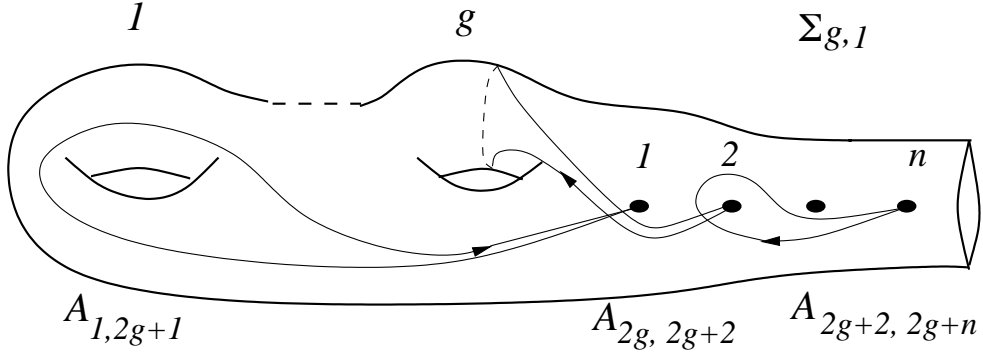,width=13cm}
\caption{\label{generateur}
Projection of representatives of the generators $A_{i,j}$. We
represent $A_{i,j}$ by its only non-trivial strand.}
\end{center}
\end{figure}

Recall that pure (framed) braid groups can be seen as subgroups of 
mapping class groups (see Proposition~\ref{prop:braids as mapping 
classes} and~\ref{prop:framed braids as mapping classes}). The 
isomorphism $P_{n}(\Sigma_{g,b})\approx\ker{\varphi_{n}}$ (where 
$\varphi_{n}$ forgets the punctures) is defined as follows: to an 
element $h$ of $\PPMCGB$ which is isotopic to the identity in 
$\Sigma_{g,b}$ (\emph{i.e.} $h\in\ker{\varphi_{n}}$), we associate 
the braid $t\mapsto\bigl(H_{t}(p_{i})\bigr)_{1\leq i\leq n}$ where 
$H:\Sigma_{g,b}\times I\to\Sigma_{g,b}$ is an isotopy between 
$\mathrm{Id}$ and $h$. From this point of view, one can easily see 
that the $A_{i,j}$'s correspond to the following elements of $\PPMCGB$:
$$
\begin{cases}
\tau_{\beta_{r}}\tau_{\beta_{r,s}}^{-1} & \textrm{if }\ i=2r-1, 1\leq r\leq g,\ j=2g+b+s-1,\ 1\leq s\leq n,\\
\tau_{\alpha_{r}}\tau_{\alpha_{r,s}}^{-1} & \textrm{if }\ i=2r, 1\leq r\leq g,\ j=2g+b+s-1,\ 1\leq s\leq n,\\
\tau_{\delta_{r}}\tau_{\delta_{r,s}}^{-1} & \textrm{if }\ i=2g+r, 1\leq r\leq b-1,\ j=2g+b+s-1,\ 1\leq s\leq
n,\\
\tau_{\delta_{r,s}}^{-1} & \textrm{if }\ i=2g+b+r-1,\ j=2g+b+s-1,\ 1\leq r<s\leq n,
\end{cases}
$$
where curves are those described by figure~\ref{courbes-1}.

\bigskip
Now, let us loot at the closed case. The group $P_{n}(\Sigma_{g})$ is the quotient of 
$P_{n}(\Sigma_{g,1})$ by the following relations (see~\cite{B}):

\setlength{\multlinegap}{0mm}
$$
\text{(TR)}\ [A_{2g,2g+k}^{-1},A_{2g-1,2g+k}]\cdots[A_{2,2g+k}^{-1},A_{1,2g+k}]=$$
$$=A_{2g+1,2g+k}\cdots A_{2g+k-1,2g+k} A_{2g+k,2g+k+1}\cdots A_{2g+k,2g+n}$$
 $$(1\leq k\leq n, \; \mbox{with the notation} \;  A_{2g+1,2g+1}=A_{2g+n,2g+n}=1)
$$

\begin{figure}[h]
\begin{center}
\psfig{figure=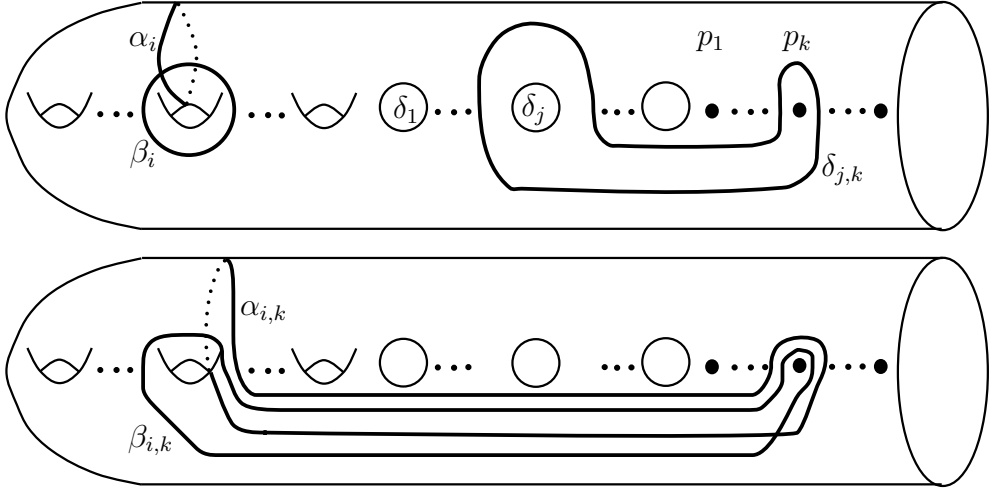}
\caption{\label{courbes-1}curves on $\Sigma_{g,b}$}
\end{center}
\end{figure}  

From this result, we can prove the following:
\begin{thm} \label{presentationframed}
Let $\Sigma_{g}$ be a compact, connected, closed, orientable surface 
of genus $g\ge\nolinebreak[4] 2$. The framed pure braid group $FP_n(\Sigma_{g})$ admits the following presentation:
\begin{enumerate}
\item[\textbf{Generators:}] $\{B_{i,j},\ f_{k}\; | \;1 \le i \le 
2g+n-1, 2g+1 \le j \le 2g+n,   \; i<j,\ 1\leq k\leq n \}.$
\item[\textbf{Relations:}] relations (PR1-4) and (ER1-2) together with the following:
\begin{eqnarray*}
&\text{(C)}&\text{ the } f_{k}\text{'s are central};\\
&&\\
&\text{(FTR)}& [B_{2g,2g+k}^{-1},B_{2g-1,2g+k}]\cdots[B_{2,2g+k}^{-1},B_{1,2g+k}]=\\
&&B_{2g+1,2g+k}\cdots B_{2g+k-1,2g+k}B_{2g+k,2g+k+1}\cdots B_{2g+k,2g+n}\,f_{k}^{2(g-1)}\\
\\
&& (1\leq k\leq n, \; \mbox{with the notation} \;  B_{2g+1,2g+1}=B_{2g+n,2g+n}=1)
\end{eqnarray*}
\end{enumerate}

\end{thm}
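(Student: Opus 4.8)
The plan is to present $FP_n(\Sigma_g)$ as the central extension $1\to\Z^n\to FP_n(\Sigma_g)\stackrel{\beta_n}{\longrightarrow}P_n(\Sigma_g)\to1$ supplied by \reth{extension centrale par Zn cas pure}, the central $\Z^n$ being generated by the framings $f_1,\dots,f_n$, and to read off the presentation from the standard recipe for central extensions. Combining \reth{purepres} with the relations (TR), the quotient $P_n(\Sigma_g)$ is presented by the $A_{i,j}$ subject to (PR1--4), (ER1--2) and (TR). Fixing lifts $B_{i,j}\in FP_n(\Sigma_g)$ with $\beta_n(B_{i,j})=A_{i,j}$, the recipe presents $FP_n(\Sigma_g)$ on the generators $B_{i,j}$ and $f_k$ by the centrality relations (C) together with one lifted relation $R=w_R$ for each defining relation $R$ of $P_n(\Sigma_g)$, where $w_R\in\langle f_1,\dots,f_n\rangle$ is a uniquely determined central correction word. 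The whole content of the theorem is then the computation of these words: $w_R=1$ for every relation in (PR1--4) and (ER1--2), while the $k$th relation (TR) lifts with $w_k=f_k^{2(g-1)}$, yielding (FTR).

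For (PR1--4) and (ER1--2) I would argue by comparison with the bounded surface. By \reth{extension centrale par Zn cas pure} one has $FP_n(\Sigma_{g,1})\cong\Z^n\times P_n(\Sigma_{g,1})$, so the natural lifts of the $A_{i,j}$ lying in the $P_n$-factor satisfy (PR1--4) and (ER1--2) with no framing correction. Capping the boundary of $\Sigma_{g,1}$ induces, via the identifications of \repr{framed braids as mapping classes} and \reco{capping}, a homomorphism $FP_n(\Sigma_{g,1})\to FP_n(\Sigma_g)$ (the restriction of the capping map $\Gamma_{g,1+n}\to\Gamma_{g,n}$) carrying these lifts to the $B_{i,j}$ and each $f_k$ to $f_k$. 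Applying it transports (PR1--4) and (ER1--2) to $FP_n(\Sigma_g)$ verbatim, so $w_R=1$ for all of them; the same map also shows the chosen $B_{i,j}$ are genuine lifts, since capping is compatible with $\beta_n$ and with the quotient $P_n(\Sigma_{g,1})\to P_n(\Sigma_g)$.

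The substance is the correction word $w_k$ of the $k$th relation (TR), which a priori equals $\prod_l f_l^{c_{k,l}}$; I would determine the exponents $c_{k,l}$ by pushing the relation along the strand-forgetting maps of the (FSPB) sequence (\reth{framedsequence}, with the strands suitably relabelled). Forgetting the $k$th strand sends every generator appearing in (TR) --- all of which carry the index $2g+k$, hence involve strand $k$ --- to the identity and kills $f_k$, so the relation descends to $\prod_{l\neq k}f_l^{c_{k,l}}=1$ in the central $\Z^{n-1}$ of $FP_{n-1}(\Sigma_g)$, forcing $c_{k,l}=0$ for $l\neq k$. Forgetting instead all strands but the $k$th maps (TR) into $FP_1(\Sigma_g)=\pi_1(U\Sigma_g)$: the right-hand side of (TR) consists entirely of strand--strand generators $A_{2g+\ast,2g+\ast}$ and collapses to $1$, the left-hand product of commutators becomes the corresponding product of commutators of the handle classes $a_r,b_r$, and $f_k$ maps to the fibre class $f$. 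The surviving identity is precisely the defining relation $\prod_{r=1}^g[a_r,b_r]=f^{2(g-1)}$ of $\pi_1(U\Sigma_g)$, whence $c_{k,k}=2(g-1)$ and $w_k=f_k^{2(g-1)}$.

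The hard part is this last identification: recognising the residual exponent as the Euler number of the unit tangent bundle $U\Sigma_g$ and fixing its sign. It rests on the classical presentation $\pi_1(U\Sigma_g)=\langle a_1,b_1,\dots,a_g,b_g,f\mid [f,a_r]=[f,b_r]=1,\ \prod_{r=1}^g[a_r,b_r]=f^{2(g-1)}\rangle$ of the circle bundle of Euler number $\pm\chi(\Sigma_g)=\pm(2-2g)$, together with a careful check that the ordering and the inverses in the left-hand side of (TR) reproduce the handle relation with the sign $+2(g-1)$ rather than $-2(g-1)$. Once this convention is fixed in the case $n=1$, the forgetting-map computation above propagates it to every $k$, and assembling $w_R=1$ for (PR) and (ER) with $w_k=f_k^{2(g-1)}$ for (TR) gives exactly the presentation by (C), (PR1--4), (ER1--2) and (FTR) asserted in the theorem.
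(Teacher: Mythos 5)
Your overall strategy is sound and, for the key step, genuinely different from the paper's. Both arguments start from the central extension of \reth{extension centrale par Zn cas pure} and the standard presentation recipe for extensions (the paper cites \cite{J}), and both dispose of (PR1--4) and (ER1--2) by reduction to the bounded surface, where $FP_n(\Sigma_{g,1})\cong\Z^{n}\times P_n(\Sigma_{g,1})$ (the paper quotes Lemma~19 of \cite{BGG}, which verifies these relations directly in $\Gamma_{g,n}$; your capping argument is essentially the same mechanism). The divergence is in the computation of the correction word for (TR): the paper works entirely inside $\Gamma_{g,n}$ with the explicit multitwist lifts \eqref{eq:generators of FPn} and evaluates both sides of (TR) by chains of lantern relations, arriving at $\tau_{\delta_{k}}^{2(g-1)}$; you instead determine the exponents $c_{k,l}$ by pushing the relation along the strand-forgetting homomorphisms of the (FSPB) sequence, reducing everything to the classical presentation of $\pi_1(U\Sigma_g)$ as a central extension of $\pi_1(\Sigma_g)$ of Euler number $2-2g$. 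Your route is conceptually cleaner and explains where $2(g-1)$ comes from; the paper's is self-contained and fixes all conventions by explicit curve manipulations.

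Two points in your argument are asserted rather than proven, and both genuinely affect the exponent. First, you need the images of the puncture--puncture generators $B_{2g+l,2g+k}$ and $B_{2g+k,2g+l}$ under the forgetting maps to be the identity and not merely central: their underlying loops become contractible once the relevant puncture is erased, so a priori they land in $\langle f\rangle$ (respectively $\langle f_l\rangle$), and a nonzero framing winding there would shift $c_{k,k}$ (respectively contaminate the $c_{k,l}$ for $l\neq k$). This does hold for your capping-defined lifts, because forgetting a strand respects the product decomposition over $\Sigma_{g,1}$ and commutes with capping, but it must be said, and it is sensitive to the choice of lift --- the paper's lifts carry an extra factor $\tau_{\delta_s}$ precisely to normalise this. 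Second, the sign in $\prod_r[a_r^{-1},b_r]=f^{\pm 2(g-1)}$, i.e.\ matching the commutator ordering and orientation conventions of (TR) with those of the unit tangent bundle presentation, is the entire quantitative content of (FTR) and is only announced as ``a careful check''; the paper's lantern-relation computation is exactly what replaces it. Neither point is a wrong turn, but as written your proof stops just short of the one computation the theorem is really about.
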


\begin{proof}
Consider the sequence~\eqref{eq:pure framed braid and pure braid}:
\begin{equation*}
1\to \Z^{n}\to FP_{n}(\Sigma_{g})\to P_{n}(\Sigma_{g})\to 1.
\end{equation*}
In terms of mapping class groups, $\Z^{n}$ is generated by $\tau_{\delta_{1}},\ldots,\tau_{\delta_{n}}$ where 
$\delta_{k}$ is a curve parallel to the $k^{\text{th}}$-boundary component. As shown in~\cite{J}, a 
presentation of $FP_{n}(\Sigma_{g,b})$ can be established as follow. Take as generators 
$$\{\tau_{\delta_{1}},\ldots,\tau_{\delta_{n}}\}\cup\{B_{i,j},\ 1 \le 
i \le 2g+n-1, 2g +1\le j \le 2g+n, i<j\}$$
where $B_{i,j}$ is a representative of $A_{i,j}$. Relations are of three types: the first corresponds 
to relations between the $\tau_{\delta_{k}}$'s, the second to lifting of each relations in 
$P_{n}(\Sigma_{g})$. The last one comes from the action under conjugation of each $B_{i,j}$ on the 
$\tau_{\delta_{k}}$'s. In order to define the $B_{i,j}$'s, consider the curves in 
Figure~\ref{courbes-1} where the boundary is capped by a disk and the marked points are replaced by holes. Then, we put

\begin{equation}\label{eq:generators of FPn}
B_{i,j}=\begin{cases}
\tau_{\beta_{r}}\tau_{\beta_{r,s}}^{-1}\tau_{\delta_{s}} & \textrm{if }\ i=2r-1, 1\leq r\leq g,\ j=2g+s,\
1\leq s\leq n,\\
\tau_{\alpha_{r}}\tau_{\alpha_{r,s}}^{-1}\tau_{\delta_{s}} & \textrm{if }\ i=2r, 1\leq r\leq g,\ j=2g+s,\
1\leq s\leq n,\\
\tau_{\delta_{r}}\tau_{\delta_{r,s}}^{-1}\tau_{\delta_{s}} & \textrm{if }\ i=2g+r,\ j=2g+s,\ 1\leq
r<s\leq n.
\end{cases}
\end{equation}

When capping each boundary components of $\Sigma_{g,n}$, the $\tau_{\delta_{k}}$'s are sent to identity, thus 
$B_{i,j}$ is indeed a representative of $A_{i,j}$.

Now, the $\tau_{\delta_{k}}$'s are central in $\Gamma_{g,n}$, thus the first and third type of relations 
correspond clearly to relations (C). Let us look at relations (PR1-4), (ER1-2) and (TR) of
$P_{n}(\Sigma_{g})$.
Relations  (PR1-4) -- (ER1-2) have been verified in~\cite[Lemma~19]{BGG} to hold in $\Gamma_{g,n}$ and
therefore in $FP_{n}(\Sigma_{g})$. Then it suffices to lift    (TR) relation from  $P_{n}(\Sigma_{g})$ to
$FP_{n}(\Sigma_{g})$.

\begin{figure}
\begin{center}
\psfig{figure=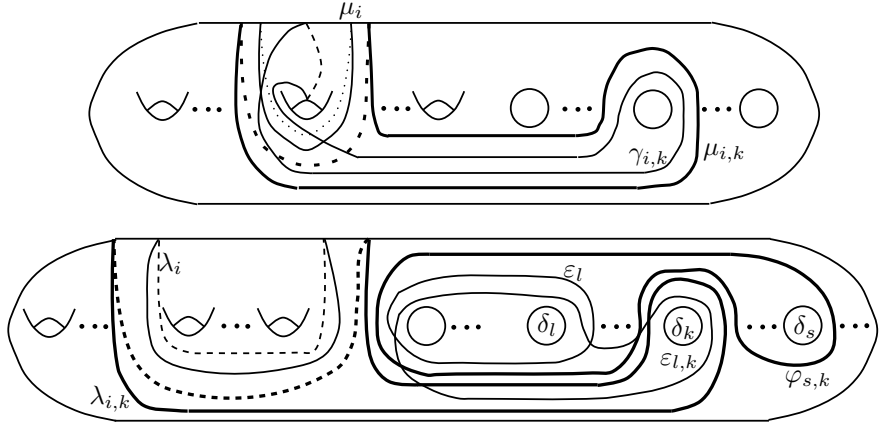}
\caption{\label{courbes-2}curves on $\Sigma_{g}$}
\end{center}
\end{figure}  

\bigskip\noindent
Since $\tau_{\beta_{i,k}}\tau_{\beta_{i}}^{-1}(\alpha_{i})=\gamma_{i,k}$ and 
$\tau_{\beta_{i,k}}\tau_{\beta_{i}}^{-1}(\alpha_{i,k})=\alpha_{i}$ where $\gamma_{i,k}$ is the curve 
descibed in figure~\ref{courbes-2}, one has (we omit the $\tau_{\delta_{l}}'s$ because they are 
central)\footnote{Recall that we denote composition of applications from left to right. Thus, if 
$\gamma=h(\alpha)$, one has $\tau_{\gamma}=h^{-1}\tau_{\alpha}h$.}:
$$
\begin{array}{r@{\,=\,}l}
[B_{2i,2g+k}^{-1},B_{2i-1,2g+k}] & 
\tau_{\alpha_{i}}^{-1}\tau_{\alpha_{i,k}}\tau_{\beta_{i}}\tau_{\beta_{i,k}}^{-1}
\tau_{\alpha_{i}}\tau_{\alpha_{i,k}}^{-1}\tau_{\beta_{i}}^{-1}\tau_{\beta_{i,k}}\\
& \tau_{\alpha_{i}}^{-1}\tau_{\alpha_{i,k}}\tau_{\gamma_{i,k}}\tau_{\alpha_{i}}^{-1}.
\end{array}
$$

Considering the curves described in figure~\ref{courbes-2}, one has the following lantern 
relations: 
$$\tau_{\alpha_{i}}^{2}\tau_{\mu_{i,k}}\tau_{\delta_{k}}=\tau_{\mu_{i}}\tau_{\alpha_{i,k}}\tau_{\gamma_{i,k}}$$
and we get
$$[B_{2i,2g+k}^{-1},B_{2i-1,2g+k}]=\tau_{\mu_{i}}^{-1}\tau_{\mu_{i,k}}\tau_{\delta_{k}}.$$
Then, using lantern relations
$$\tau_{\mu_{i-1}}\tau_{\lambda_{i}}\tau_{\lambda_{i-1,k}}\tau_{\delta_{k}}=\tau_{\lambda_{i-1}}
\tau_{\lambda_{i,k}}\tau_{\mu_{i-1,k}},$$
it is easy to check by induction on $i$ that\footnote{Note that $\lambda_{g}=\mu_{g}$ and 
$\lambda_{g,k}=\mu_{g,k}$.}
\begin{equation}\label{left member}
[B_{2g,2g+k}^{-1},B_{2g-1,2g+k}]\cdots[B_{2i,2g+k}^{-1},B_{2i-1,2g+k}]=
\tau_{\lambda_{i}}^{-1}\tau_{\lambda_{i,k}}\tau_{\delta_{k}}^{2(g-i)+1}.
\end{equation}
On the other hand, by definition of the $B_{i,j}$'s, on has, for $1\leq k\leq n$:
\setlength{\multlinegap}{0mm}
\begin{multline*}
B_{2g+1,2g+k}\cdots B_{2g+k-1,2g+k}B_{2g+k,2g+k+1}\cdots B_{2g+k,2g+n}=\\
\tau_{\delta_{1}}\tau_{\delta_{1,k}}^{-1}\cdots\tau_{\delta_{k-1}}\tau_{\delta_{k-1,k}}^{-1}
\tau_{\delta_{k+1}}\tau_{\delta_{k,k+1}}^{-1}\cdots\tau_{\delta_{n}}\tau_{\delta_{k,n}}^{-1}\tau_{\delta_{k}}^{n-1}
\end{multline*}

\medskip\noindent
Now, with the curves described in figure~\ref{courbes-2}, one has the lantern relation
$$
\tau_{\varepsilon_{i-1}}\tau_{\delta_{i}}\tau_{\delta_{k}}\tau_{\varepsilon_{i,k}}=
\tau_{\varepsilon_{i}}\tau_{\delta_{i,k}}\tau_{\varepsilon_{i-1,k}},
$$
from which one can check by induction on $i< k$ that ($\varepsilon_{1}=\delta_{1}$ and $\varepsilon_{1,k}=\delta_{1,k}$) 
$$\tau_{\delta_{1,k}}^{-1}\tau_{\delta_{2,k}}^{-1}\cdots\tau_{\delta_{i,k}}^{-1}=
\tau_{\varepsilon_{i}}\tau_{\varepsilon_{i,k}}^{-1}\tau_{\delta_{1}}^{-1}\tau_{\delta_{2}}^{-1}
\cdots\tau_{\delta_{i}}^{-1}\tau_{\delta_{k}}^{1-i}.
$$
Then, with the lantern relation (where $s>k$ and $\varphi_{k,k}=\varepsilon_{k-1}$)
$$
\tau_{\varepsilon_{s}}\tau_{\delta_{s}}\tau_{\delta_{k}}\tau_{\varphi_{s-1,k}}=
\tau_{\varphi_{s,k}}\tau_{\delta_{k,s}}\tau_{\varepsilon_{s-1}},
$$
one obtains by induction on $s$
\begin{equation*}
\tau_{\delta_{1,k}}^{-1}\cdots\tau_{\delta_{k-1,k}}^{-1}\tau_{\delta_{k,k+1}}^{-1}\cdots\tau_{\delta_{k,s}}^{-1}=
\tau_{\varphi_{s,k}}\tau_{\varepsilon_{s}}^{-1}\tau_{\delta_{s}}^{-1}\cdots\tau_{\delta_{k+1}}^{-1}
\tau_{\delta_{1}}^{-1}\cdots\tau_{\delta_{k-1}}^{-1}\tau_{\delta_{k}}^{2-s}
\end{equation*}
and finally
\begin{align}
B_{2g+1,2g+k}\cdots B_{2g+k-1,2g+k}B_{2g+k,2g+k+1}\cdots B_{2g+k,2g+n} & =  
\tau_{\varphi_{n,k}}\tau_{\varepsilon_{n}}^{-1}\tau_{\delta_{k}}^{2-n}\tau_{\delta_{k}}^{n-1}\notag\\
& 
=\tau_{\varphi_{n,k}}\tau_{\varepsilon_{n}}^{-1}\tau_{\delta_{k}}\label{eq:right member}.
\end{align}
To conclude, one has just to compare equation~\eqref{left member} 
with $i=1$ and equation~\eqref{eq:right member}, 
 and to observe that $\varepsilon_{n}=\lambda_{1}$ and $\varphi_{n,k}=\lambda_{1,k}$.
\end{proof}

\begin{rem}
    This presentation gives another proof of the fact that the 
    central extension~\eqref{eq:pure framed braid and pure braid} does not 
    split for all $g \ge 2$. Indeed, if it splits, the group 
    $FP_n(\Sigma_{g})$ is the direct product of $\Z^{n}$ and 
    $P_n(\Sigma_{g})$: it contredicts the relation {\it (FTR)}.
\end{rem}


\section{Framed pure braids as centralizers of Dehn twists}  \label{section:frameddehn}

We give a third possible definition of framed pure braid groups in terms of centralizers of Dehn twists:
since in the case of surfaces with boundary the corresponding
framed pure braid groups are trivial central extension of pure braid groups, we will focus on the case of closed surfaces of genus greater than one.
This interpretation of framed pure braid groups will allow us to 
introduce another possible definition of framed braid groups 
(Section~\ref{section:framesbraidsas2nstrands}).
All following Definitions and Propositions can be easily extended to the case with boundary.

Let us consider a surface $\Sigma_{g,n}$ of genus $g\ge 2$ and $n$ boundary components and 
cap each boundary component  with a disk with $2$ marked points (see Figure~\ref{figure:framedasbraids}).
We will denote by  $\chi_n: \Gamma_{g,n}  \to  \mathrm{P}\Gamma^{2n}_{g}$
the morphism induced by the inclusion  of $\Sigma_{g,n}$ into 
$\Sigma_{g}^{2n}$, a surface of genus $g$ with $2n$ marked points.

One has the  following commutative diagram at the level of mapping classes:

\begin{figure} 
	\centering
	\includegraphics[width=12cm]{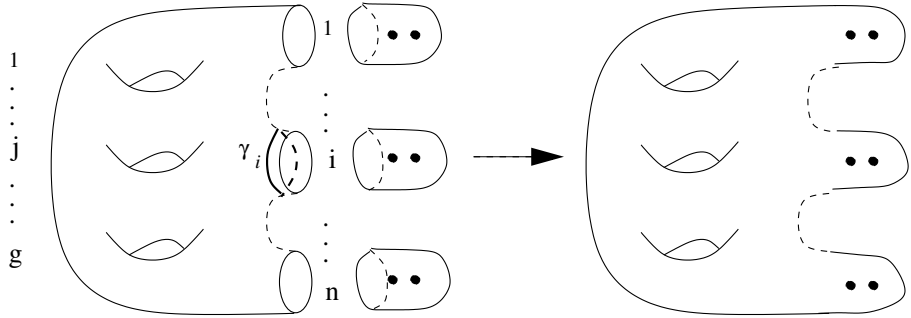}
	\caption{Inclusion of $\Sigma_{g,n}$ into $\Sigma_{g}^{2n}$\label{figure:framedasbraids}}
\end{figure}

$$
\xymatrix{
 1   \ar[r] &\ FP _n(\Sigma_g)  \ar[r] \ar[d]  & \Gamma_{g,n}   \ar[r]^{\lambda_g^n} \ar[d]^{\chi_n} & \Gamma_g \ar[d]^{id}  \ar[r] &1\\
 1   \ar[r] &  P_{2n}(\Sigma_g) \ar[r] &  \mathrm{P}\Gamma^{2n}_{g} \ar[r]^{\phi_{2n}}  & \Gamma_g \ar[r] & 1
  } 
$$
Since   $\chi_n$ is injective (see Proposition 4.1 in~\cite{PR1}), we 
can consider  $FP_n(\Sigma_g) $ as a subgroup of  $P_{2n} (\Sigma_g)$.

Moreover, we can provide a characterization of $FP_n(\Sigma_g) $ as subgroup of $\mathrm{P}\Gamma^{2n}_{g}$.

\begin{prop}\label{prop:thirdequiv}
Let  $g\ge 2$, $\chi_n: \Gamma_{g,n} \to \mathrm{P}\Gamma_g^{2n}$ be the map defined above and  $\gamma_i$ be the boundary
of the $i$th capping punctured disk (see Figure~\ref{figure:framedasbraids}).
The group  $\chi_n(FP_n(\Sigma_g))$ coincides with ${\displaystyle 
\build{\cap}{j=1}{n}C_{P_{2n} (\Sigma_g)}(\tau_{\gamma_j})}$,
where $C_{P_{2n} (\Sigma_g)}(\tau_{\gamma_j})$ is the centralizer of the Dehn twist  $\tau_{\gamma_j}$ in $P_{2n} (\Sigma_g)$.
\end{prop}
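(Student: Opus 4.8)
The plan is to prove the equality of subgroups of $P_{2n}(\Sigma_g)$ by a double inclusion, translating both sides into the language of mapping classes via the injection $\chi_n$ and the isomorphism $\PFPMCGB\approx\Gamma_{g,n}$ from the earlier remark. First I would set up the geometric picture carefully: under $\chi_n$, the surface $\Sigma_{g,n}$ is embedded in $\Sigma_g^{2n}$ so that the $j$th boundary component of $\Sigma_{g,n}$ becomes the curve $\gamma_j$ bounding the $j$th capping disk, which now carries two marked points. A diffeomorphism $h$ of $\Sigma_{g,n}$ is, after this embedding, a diffeomorphism of $\Sigma_g$ fixing the $2n$ marked points and supported away from the discs bounded by the $\gamma_j$; in particular it fixes each curve $\gamma_j$ setwise (indeed pointwise), so it commutes with $\tau_{\gamma_j}$. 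This gives the easy inclusion $\chi_n(FP_n(\Sigma_g))\subseteq\bigcap_{j=1}^{n}C_{P_{2n}(\Sigma_g)}(\tau_{\gamma_j})$ essentially for free.

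For the reverse inclusion, I would take an element $f\in P_{2n}(\Sigma_g)$, represented by a diffeomorphism fixing the $2n$ marked points, with the property that $f$ commutes with each $\tau_{\gamma_j}$. The goal is to show $f$ lies in the image of $\chi_n$, i.e.\ that $f$ can be represented (up to isotopy fixing the marked points) by a diffeomorphism supported on $\Sigma_g$ minus the $n$ discs bounded by the $\gamma_j$. The key tool here is the standard fact that, for a generic simple closed curve $\alpha$ (neither bounding a disc nor a once-punctured disc), $f$ commutes with $\tau_\alpha$ if and only if $f$ preserves the isotopy class of $\alpha$. Since the $\gamma_j$ each bound a disc with two marked points, none of them is trivial in the relevant sense, so the commutation hypothesis forces $f$ to permute — in fact, being pure and each $\gamma_j$ enclosing its own pair of marked points, to fix — the isotopy class of each $\gamma_j$. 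Having fixed the class of each $\gamma_j$, I would isotope $f$ so that it fixes each $\gamma_j$ setwise and then, since $f$ already fixes the two marked points inside each disc, adjust it to be the identity on a neighbourhood of these discs; the resulting representative is supported on the complementary subsurface $\Sigma_{g,n}$, exhibiting $f$ as an element of $\chi_n(FP_n(\Sigma_g))$.

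The main obstacle is making the last step precise: commuting with $\tau_{\gamma_j}$ guarantees only that the \emph{isotopy class} of $\gamma_j$ is preserved, and I must upgrade this to a genuine representative that is the identity near all $n$ discs simultaneously, without disturbing the marked points or the other curves. This requires the change-of-coordinates principle together with a simultaneous-isotopy argument: because the $\gamma_j$ are disjoint and bound disjoint discs, one can isotope $f$ to fix all of them at once, and then the action of $f$ restricted to each capped disc (a twice-punctured disc) must be, up to isotopy rel its two marked points and boundary, an element commuting with $\tau_{\gamma_j}$; analysing the mapping class group of the twice-punctured disc shows this forces $f$ to be isotopic to the identity on each disc, allowing the desired normalisation. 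One subtlety to address is that a Dehn twist along a marked-point-enclosing curve can also commute with a half-twist exchanging the two points; but since we work in the \emph{pure} braid group $P_{2n}(\Sigma_g)$, no such exchange occurs, and the argument goes through cleanly.
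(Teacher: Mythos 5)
Your proof has the same skeleton as the paper's (double inclusion; the easy direction from disjointness of supports; the hard direction by normalising the mapping class to be the identity on the $n$ capping discs), but the key lemma you use for the hard direction is different. The paper invokes Corollary~\ref{centrcor}, deduced from the arc/braid-twist machinery of~\cite{belcen} (Theorem~\ref{centr}): commuting with $\tau_{\gamma_j}$ is equivalent to preserving the isotopy class of the arc $[2j-1,2j]$, whence the capping disc (a regular neighbourhood of that arc) is preserved. You instead use the standard fact that $f\tau_{\gamma_j}f^{-1}=\tau_{f(\gamma_j)}$ together with injectivity of $\gamma\mapsto\tau_{\gamma}$ on isotopy classes of essential curves, so that commuting with $\tau_{\gamma_j}$ forces $f(\gamma_j)\simeq\gamma_j$; this is equally valid for $g\ge 2$ and avoids citing~\cite{belcen}, at the price of the change-of-coordinates and simultaneous-isotopy discussion you describe. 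Your easy inclusion is also slightly more general than the paper's, which only checks commutation on the multitwist generators~\eqref{eq:generators of FPn}.

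Two details to tighten. First, the restriction of $f$ to a capped disc is \emph{not} forced to be isotopic to the identity rel boundary: the mapping class group of the twice-punctured disc rel boundary is $B_{2}\cong\Z$ and $\tau_{\gamma_j}$ is central there, so ``commuting with $\tau_{\gamma_j}$'' imposes no constraint on that restriction; purity only forces it to be a power of the boundary twist $\tau_{\gamma_j}$. That is enough --- such a power can be pushed into an annular collar outside the disc, or equivalently is already the $\chi_n$-image of a boundary twist of $\Sigma_{g,n}$ --- but you should say this rather than claim the restriction is trivial. Second, once $f$ is represented by a diffeomorphism supported on $\Sigma_{g,n}$, you still need the resulting element $\tilde f\in\Gamma_{g,n}$ to lie in $FP_n(\Sigma_g)=\ker{\lambda_{g}^{n}}$ and not merely in $\Gamma_{g,n}$; this is immediate from $\lambda_{g}^{n}(\tilde f)=\phi_{2n}(\chi_n(\tilde f))=\phi_{2n}(f)=1$ since $f\in P_{2n}(\Sigma_g)$, but it is the step the paper closes with and your write-up omits it.
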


Before proving~\repr{thirdequiv} we need to introduce some definitions and to recall some results.
Let $\Sigma$ be a surface with a finite set  $\mathcal{P}$ of $n$ 
marked points. An arc is an embedding $A:[0,1] \to \Sigma$ such that 
$A(0), A(1) $ are in $\mathcal{P}$ and $A(x)$ is in $ \Sigma 
\setminus \mathcal{P}$ for all $x $ in $ (0,1)$. A $(j,k)$-arc is  an 
arc such that  $A(0)=j$ and $A(1)=k$. Note that two $(j,k)$-arcs  are 
isotopic if and only if they can be connected by a continuous family of $(j,k)$-arcs.

For any  $i=1, \dots, n $ we fix   an arc with end points  $2i-1$ and  $2i$ in the interior of the 
$i$th capping disk. In a disk with $2$ punctures all arcs are isotopic; let us choose a representative 
as in Figure~\ref{figure:figurearc} that   we will denote  by $[2i-1, 2i]$.

\begin{figure}
	\centering
\includegraphics[width=5cm]{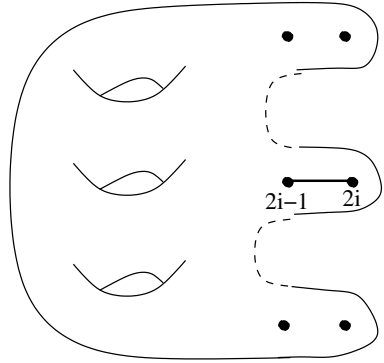}
\caption{\label{figure:figurearc} The arc $[2i-1, 2i]$.}
\end{figure}

 Consider a disk $\D_{2n}$ in $\Sigma_g^{2n} $ which contains all $2n$ punctures. The embedding of $\D_{2n}$  into $\Sigma_g^{2n}$
 induces an embedding of  $B_{2n}$ into  $B_{2n}(\Sigma_g)$ (see for instance~\cite{PR1}):
the usual generator $\sigma_j$ of  $B_{2n}$ can be considered therefore as an element of $B_{2n}(\Sigma_g)$;
in particular the generator $\sigma_{2j-1}$ for $j=1,\ldots,n$
 corresponds to  the \emph{braid twist}  (see~\cite{LP} for a definition)  associated to the arc $[2j-1, 2j]$.

The group  $B_{2n}(\Sigma_g)$  acts on $\Sigma_g$ up to  isotopy.
In the following we adopt the convention that, for any $\beta$ in $B_{2n}(\Sigma_g)$,
$*\beta: \Sigma_g \to \Sigma_g$ corresponds to a mapping  $\Sigma_g \times \{0\} \to \Sigma_g \times \{1\}$
and defines an action on the right, whereas
$\beta *: \Sigma_g \to \Sigma_g$ corresponds to a mapping  $\Sigma_g \times \{1\} \to \Sigma_g
\times \{0\}$ and defines an action on the left.

In particular   $B_{2n}(\Sigma_g)$ acts on the right and on the left on
$\mathcal{A}_{2n}$, the set of arcs up to isotopy.

\begin{thm} ~\cite{belcen} \label{centr}
Let $g>1$. For each $\beta$ in $B_{2n}(\Sigma_g)$ the following properties are equivalent:
\begin{enumerate}
        \item $\sigma_{2j-1} \beta = \beta \sigma_{2k-1} \, $, 
        \item $\sigma_{2j-1}^r \beta = \beta \sigma_{2k-1}^r \, $ for any integer $r$,  
	\item  $\sigma_{2j-1}^r \beta = \beta \sigma_{2k-1}^r \,$ for some nonzero  integer $r$,  
	\item $[2j-1,2j]*\beta=[2k-1,2k]$.
\end{enumerate}
\end{thm}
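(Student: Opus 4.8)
The statement is a cyclic chain of four equivalences, so the plan is to prove $(1)\Rightarrow(2)\Rightarrow(3)\Rightarrow(4)\Rightarrow(1)$, with the implications $(1)\Rightarrow(2)$ and $(2)\Rightarrow(3)$ essentially formal. Indeed, $(2)\Rightarrow(3)$ is trivial (take any nonzero $r$), and $(1)\Rightarrow(2)$ follows by raising the relation $\sigma_{2j-1}\beta=\beta\sigma_{2k-1}$ to the $r$th power: since $\sigma_{2j-1}^r\beta = \sigma_{2j-1}^{r-1}(\sigma_{2j-1}\beta)=\sigma_{2j-1}^{r-1}\beta\sigma_{2k-1}$, an immediate induction on $r$ (and the inverse relation for negative $r$) gives $\sigma_{2j-1}^r\beta=\beta\sigma_{2k-1}^r$. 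So the real content is in the last two implications, which translate an algebraic commutation statement into the geometric statement about arcs.

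The key geometric fact I would use is the interpretation of $\sigma_{2j-1}$ as the braid twist $t_{[2j-1,2j]}$ associated to the arc $[2j-1,2j]$, together with the conjugation formula for braid twists: for any $\beta\in B_{2n}(\Sigma_g)$, one has $\beta^{-1}\, t_{A}\,\beta = t_{A*\beta}$, i.e. conjugating a braid twist by $\beta$ produces the braid twist along the image arc $A*\beta$. This is the braid-group analogue of the familiar mapping-class identity $\tau_{h(\alpha)}=h^{-1}\tau_{\alpha}h$ recalled in the footnote of the previous section. Granting this formula, the relation $\sigma_{2j-1}^r\beta=\beta\sigma_{2k-1}^r$ rewrites as $\beta^{-1}\sigma_{2j-1}^r\beta=\sigma_{2k-1}^r$, that is $t_{[2j-1,2j]*\beta}^{\,r}=t_{[2k-1,2k]}^{\,r}$ as elements of $B_{2n}(\Sigma_g)$.

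The main obstacle — and the step where I expect the argument to require genuine care — is the implication $(3)\Rightarrow(4)$: deducing the \emph{isotopy of arcs} $[2j-1,2j]*\beta=[2k-1,2k]$ from the equality of powers of the two braid twists $t_{[2j-1,2j]*\beta}^{\,r}=t_{[2k-1,2k]}^{\,r}$ for a single nonzero $r$. This needs an injectivity statement: the map sending an arc (up to isotopy) to any fixed nonzero power of its braid twist is injective. For $g>1$ this is exactly the kind of result proved in~\cite{belcen}, and I would invoke it; the hypothesis $g>1$ is used precisely here to rule out the extra symmetries and torsion phenomena that occur on the sphere and torus. The subtlety is that one must pass from equality of the $r$th powers to equality of the arcs, not merely of the twists, so one relies on the fact that in $B_{2n}(\Sigma_g)$ the braid twist along an arc is determined by, and determines, the isotopy class of that arc (and that braid twists are of infinite order for $g>1$).

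Finally, for $(4)\Rightarrow(1)$ I would run the same conjugation formula forward: if $[2j-1,2j]*\beta=[2k-1,2k]$, then $\beta^{-1}\sigma_{2j-1}\beta=t_{[2j-1,2j]*\beta}=t_{[2k-1,2k]}=\sigma_{2k-1}$, which is precisely $\sigma_{2j-1}\beta=\beta\sigma_{2k-1}$, closing the cycle. Thus the whole proof reduces to (a) the conjugation-of-braid-twists formula and (b) the injectivity of the arc-to-twist-power correspondence from~\cite{belcen}, with the latter being the crux.
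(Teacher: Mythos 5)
First, a point of comparison: the paper does not prove this statement at all. The theorem is imported from the literature --- the citation in its header and the Remark immediately following it say explicitly that it is a weakened form of Theorem~2.2 of~\cite{belcen}, restricted to the generators $\sigma_{2j-1}$ of $B_{2n}(\Sigma_g)$. So there is no in-paper proof to match your argument against, and any self-contained attempt is already doing more than the paper does.

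Your outline is structurally sound: $(2)\Rightarrow(3)$ is trivial, $(1)\Rightarrow(2)$ is the induction you describe (with the inverse relation handling negative $r$), and the conjugation formula $\beta^{-1}t_{A}\beta=t_{A*\beta}$ for braid twists (consistent with the paper's left-to-right convention and the footnote $\tau_{h(\alpha)}=h^{-1}\tau_{\alpha}h$) correctly converts $(4)$ into $(1)$ and converts $(3)$ into the equality $t_{[2j-1,2j]*\beta}^{\,r}=t_{[2k-1,2k]}^{\,r}$. The caveat is your treatment of the crux $(3)\Rightarrow(4)$: the ``injectivity of the arc-to-twist-power map'' that you invoke is not an independently quotable lemma so much as a restatement of the very implication being proved, so as written the step is close to circular. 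To make it a genuine reduction you should derive that injectivity, for instance as follows: $t_{A}^{2r}=t_{B}^{2r}$ gives $\tau_{\partial N(A)}^{r}=\tau_{\partial N(B)}^{r}$, where $\partial N(A)$ is the boundary of a regular neighbourhood of the arc $A$ (a curve surrounding the two endpoints of $A$); since for $g>1$ the group $B_{2n}(\Sigma_g)$ embeds in $\Gamma_{g}^{2n}$ (Proposition~\ref{prop:braids as mapping classes}) and nonzero powers of Dehn twists along non-isotopic essential curves are distinct there, one gets $\partial N(A)=\partial N(B)$ up to isotopy, hence $A=B$ because an essential arc joining the two punctures inside a twice-punctured disc is unique up to isotopy. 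With that supplement your argument closes; note also that this route differs from the band/ribbon argument of~\cite{belcen} (cf.\ Definitions~\ref{def1}--\ref{def2} and Proposition~\ref{prop:braidcaracter}), which establishes the equivalence by producing a $(2j-1,2k-1)$-band rather than by an injectivity statement for twist powers.
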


\begin{rem}
Theorem~\ref{centr} is a weaker version of Theorem~2.2 in~\cite{belcen}, which statement concerns
all braid groups  $B_n(\Sigma_g)$ and all braid generators $\sigma_1, \ldots, \sigma_{n-1}$.
\end{rem}

We recall that $\gamma_i$ denotes the boundary of the $i$th capping disk.
From the fact that  $\sigma_{2i-1,2i}^2= \tau_{\gamma_i}$ we can therefore deduce the following result:

\begin{cor}\label{centrcor}
Let $g>1$. For each $\beta$ in $P_{2n}(\Sigma_g)$ the following properties are equivalent:
\begin{enumerate}
        \item $\tau_{\gamma_i} \beta = \beta \tau_{\gamma_i} \, $, 
        \item  $\tau_{\gamma_i}^r \beta = \beta \tau_{\gamma_i}^r  , \; $  for any integer $r$,  
	\item $\tau_{\gamma_i}^r \beta = \beta  \tau_{\gamma_i}^r , \;$ for some nonzero  integer $r$,  
	\item $[2i-1,2i]*\beta=[2i-1,2i]$.
\end{enumerate}
\end{cor}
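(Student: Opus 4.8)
The plan is to derive Corollary~\ref{centrcor} from Theorem~\ref{centr} by translating the statement about braid generators $\sigma_{2j-1}$ into one about the Dehn twists $\tau_{\gamma_i}$, using the key relation $\sigma_{2i-1,2i}^2=\tau_{\gamma_i}$ recalled just before the corollary. Since we restrict to the \emph{pure} braid group $P_{2n}(\Sigma_g)$ and to the \emph{same} index $i$ on both sides (rather than allowing distinct indices $j,k$ as in the general theorem), the four conditions should match up cleanly. First I would observe that the braid twist $\sigma_{2i-1}$, viewed as an element of $B_{2n}(\Sigma_g)$ via the disk embedding, satisfies $\sigma_{2i-1}^2=\tau_{\gamma_i}$, so every power of $\tau_{\gamma_i}$ is an even power of $\sigma_{2i-1}$.

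The main work is the chain of implications. The implications \emph{(i)} $\Rightarrow$ \emph{(ii)} $\Rightarrow$ \emph{(iii)} are trivial (specialising $r$ and taking powers). For \emph{(iii)} $\Rightarrow$ \emph{(iv)}, I would note that $\tau_{\gamma_i}^r\beta=\beta\tau_{\gamma_i}^r$ reads $\sigma_{2i-1}^{2r}\beta=\beta\sigma_{2i-1}^{2r}$, which is exactly condition \emph{(iii)} of Theorem~\ref{centr} with $j=k=i$ and nonzero integer $2r$; hence \emph{(iv)} of that theorem gives $[2i-1,2i]*\beta=[2i-1,2i]$, which is our \emph{(iv)}. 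Conversely, for \emph{(iv)} $\Rightarrow$ \emph{(i)}, the arc equation $[2i-1,2i]*\beta=[2i-1,2i]$ is condition \emph{(iv)} of Theorem~\ref{centr} (again $j=k=i$), so by that theorem $\sigma_{2i-1}\beta=\beta\sigma_{2i-1}$; squaring — or rather using that commutation with $\sigma_{2i-1}$ forces commutation with $\sigma_{2i-1}^2=\tau_{\gamma_i}$ — yields $\tau_{\gamma_i}\beta=\beta\tau_{\gamma_i}$, which is \emph{(i)}.

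The one point requiring a little care, and the step I expect to be the main (though minor) obstacle, is the passage between commuting with $\sigma_{2i-1}$ and commuting with its square $\tau_{\gamma_i}$. Commutation with $\sigma_{2i-1}$ immediately gives commutation with $\tau_{\gamma_i}=\sigma_{2i-1}^2$, so the direction feeding into \emph{(i)} is automatic. The subtler direction is that commutation with $\tau_{\gamma_i}=\sigma_{2i-1}^2$ should feed back into the hypotheses of Theorem~\ref{centr}: here one uses precisely the equivalence \emph{(iii)} $\Leftrightarrow$ \emph{(i)} of that theorem, which guarantees that commuting with \emph{some} nonzero power of $\sigma_{2i-1}$ already forces commuting with $\sigma_{2i-1}$ itself. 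This is exactly why the theorem is stated with the seemingly redundant conditions \emph{(i)}--\emph{(iii)}: the equivalence lets us replace the even power $\sigma_{2i-1}^{2r}$ coming from $\tau_{\gamma_i}^r$ by the first power without loss. Assembling these implications into a cycle \emph{(i)} $\Rightarrow$ \emph{(ii)} $\Rightarrow$ \emph{(iii)} $\Rightarrow$ \emph{(iv)} $\Rightarrow$ \emph{(i)} completes the proof.
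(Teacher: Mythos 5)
Your proposal is correct and follows exactly the route the paper intends: the paper's entire justification is the one-line remark that $\sigma_{2i-1}^{2}=\tau_{\gamma_i}$, so that the corollary is Theorem~\ref{centr} specialised to $j=k=i$ with the power $2r$ playing the role of the ``some nonzero integer'' in condition \emph{(c)}. Your identification of the only delicate point --- that the equivalence \emph{(c)} $\Leftrightarrow$ \emph{(a)} of Theorem~\ref{centr} is what lets commutation with the even power $\sigma_{2i-1}^{2r}$ feed back into the theorem --- is precisely the content the authors leave implicit.
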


\begin{rem}\label{centrrem} The equivalences between a) and d) are in the folkrore even for the case $g=1$.
\end{rem}

\begin{prooftext}{\repr{thirdequiv}}
We recall that a multitwist is a product of twists along pairwise disjoint curves
and that the map   $\chi_n:  \Gamma_{g,n} \to   \mathrm{P}\Gamma^{2n}_{g}$ defined above is injective
and sends $FP_n(\Sigma_g)$ into $P_{2n} (\Sigma_g)$.
Any  generator $g$ of $FP_n(\Sigma_g)$ is a multitwist 
(see~\eqref{eq:generators of FPn}), as well as its image
$\chi_n(g)$. Since two twists $\tau_\gamma$ and $\tau_\delta$ commute if and only if $\gamma$ and $\delta$ are 
disjoint up to isotopy (see for instance~\cite{PR2}), we deduce that any element  $\chi_n(g)$ commutes with
$\tau_{\gamma_j}$ for $j=1,\ldots, n$ and therefore 
$\chi_n(FP_n(\Sigma_g))$ is a subgroup of $\build{\cap}{j=1}{n}C_{P_{2n} (\Sigma_g)}(\tau_{\gamma_j})$.
On the other hand, if $g$ is an element of $\build{\cap}{j=1}{n}C_{P_{2n} (\Sigma_g)}(\tau_{\gamma_j})$, from Corollary~\ref{centrcor}
it follows that $[2j-1,2j] * g = [2j-1,2j] $ for $j=1,\ldots, n$ and then up to isotopy we can suppose that
$g=Id$ on each capping disk.
Therefore we can consider $g$ as the image by $\chi_n$ of an element 
$\tilde{g}$ in $\Gamma_{g,n}$. Since
$\lambda_g^n(\tilde{g})=\phi_{2n}(\chi_n(\tilde{g}))=\phi_{2n}(g)=1$ we deduce that   $\tilde{g} \in FP_n(\Sigma_g)$ and hence that
$FP_n(\Sigma_g)$ maps onto $\build{\cap}{j=1}{n}C_{P_{2n} (\Sigma_g)}(\tau_{\gamma_j})$.

\end{prooftext}

Let  $A_{k,l}$ be  the generator of $P_{2n}(\Sigma_g)$ depicted in~\reth{purepres} for   $1 \le k \le 2g+2n-1$
and $2g +1\le l \le 2g+2n $.
Now, set:
\setlength{\multlinegap}{8mm}
\begin{multline*}
C_{i,j} = A_{i,2(j-g)-1}A_{i,2(j-g)}A_{2(j-g)-1,2(j-g)}\ \text{ for 
}\ i=1,\dots, 2g\ \text{ and }\\  j=2g+1, \ldots, 2g+n,
\end{multline*}
\vspace*{-10mm}\setlength{\multlinegap}{0mm}
\begin{multline*}
C_{i,j}=A_{2(i-g)-1,2(j-g)-1}A_{2(i-g),2(j-g)-1}A_{2(i-g)-1,2(j-g)}A_{2(i-g),2(j-g)}A_{2(j-g)-1,2(j-g)}
\\ \text{ for }\ 2g+1\le i <j\le 2g+n
\end{multline*}
and finally   $F_{k}= A_{2g+2k-1, 2g+2k}$ for $k=1,\ldots,n$.

Roughly speaking, the element $F_{k}$ corresponds to $\sigma_{2k-1,2k}^2$ 
in $P_{2n}(\Sigma_g)$
and the $C_{i,j}$'s correspond  to pure braids on $P_{2n}(\Sigma_g)$
where the only non trivial strands are the $(2(j-g)-1)$th and the  $2(j-g)$th one, which are ``parallel'',
meaning that they bound an annulus in $\Sigma_g \setminus \{p_{1}, \ldots, p_{2(j-g-1)}, p_{2(j-g)+1}, \dots, p_{2n} \, \}$.

We recall that it is possible to embed $P_n$ into $P_{2n}$ (or $B_n$ into $B_{2n}$) ``doubling'' any  strand (see for instance
\cite{FRZ}); one can remark that in the case of braid groups on closed surfaces of genus $g\ge2$ such embeddings are not well defined because of 
\reth{extension centrale par Zn cas pure}.

From Theorem~\ref{presentationframed} and~\repr{thirdequiv} one can therefore deduce the following group presentation for
$FP_n(\Sigma_{g})$ as subgroup of $FP_{2n}(\Sigma_{g})$.

\begin{prop}\label{presentationpureframedascollectionsofpaths}
Let $\Sigma_{g}$ be a compact, connected, closed, orientable surface of genus $g > 1$. The framed braid group 
$FP_n(\Sigma_{g})$ as subgroup of $P_{2n}(\Sigma_{g})$ admits the following presentation:
\begin{enumerate}
\item[\textbf{Generators:}] $\{C_{i,j}, F_{k}\; | \;1 \le i \le 2g+n-1, 2g +1\le j \le 2g+n, $

\hfill $i<j,\ 1\leq k\leq n \}.$
\item[\textbf{Relations:}] relations (PR1-4) and (ER1-2) from~\reth{purepres} replacing $A_{i,j}$ with $C_{i,j}$ together with the following:
\begin{eqnarray*}
&\text{(C)}&\text{ the } F_{k}\text{'s are central};\\
&&\\
&\text{(FTR)}& [C_{2g,2g+k}^{-1},C_{2g-1,2g+k}]\cdots[C_{2,2g+k}^{-1},C_{1,2g+k}]=\\
&&C_{2g+1,2g+k}\cdots C_{2g+k-1,2g+k}C_{2g+k,2g+k+1}\cdots C_{2g+k,2g+n}\, F_{k}^{2(g-1)}\\
\\
&& (1\leq k\leq n, \; \mbox{with the notation} \;  C_{2g+1,2g+1}=C_{2g+n,2g+n}=1)
\end{eqnarray*}
\end{enumerate}
\end{prop}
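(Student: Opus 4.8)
The plan is to transport the presentation of Theorem~\ref{presentationframed} across the map $\chi_n$. Recall from the discussion preceding \repr{thirdequiv} that $\chi_n:\Gamma_{g,n}\to\mathrm{P}\Gamma_g^{2n}$ is injective and carries $FP_n(\Sigma_g)$ into $P_{2n}(\Sigma_g)$, and that by \repr{thirdequiv} its image is exactly $\build{\cap}{j=1}{n}C_{P_{2n}(\Sigma_g)}(\tau_{\gamma_j})$. Hence $\chi_n$ restricts to an isomorphism from $FP_n(\Sigma_g)$ onto this subgroup, which is precisely ``$FP_n(\Sigma_g)$ as a subgroup of $P_{2n}(\Sigma_g)$''. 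Since an isomorphism sends a group presentation to a presentation of its image having the \emph{same} relations in the images of the generators, it suffices to compute $\chi_n(B_{i,j})$ and $\chi_n(f_k)$ for the generators appearing in Theorem~\ref{presentationframed}, and to check that they coincide with $C_{i,j}$ and $F_k$.

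The image of the central generators is immediate. One has $f_k=\tau_{\delta_k}$, where $\delta_k$ is parallel to the $k$th boundary component, and capping that boundary by a twice-punctured disk sends $\delta_k$ to $\gamma_k$. Thus $\chi_n(f_k)=\tau_{\gamma_k}=\sigma_{2k-1,2k}^2=A_{2g+2k-1,2g+2k}=F_k$, using the identity $\sigma_{2i-1,2i}^2=\tau_{\gamma_i}$ recalled above.

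The heart of the proof is the identification $\chi_n(B_{i,j})=C_{i,j}$. Writing $j=2g+s$, the operation $\chi_n$ caps the $s$th boundary with a twice-punctured disk, replacing the $s$th strand by the parallel pair of strands with indices $2(j-g)-1$ and $2(j-g)$ and sending $\tau_{\delta_s}$ to $\tau_{\gamma_s}=F_s$. Feeding the multitwist representatives from \eqref{eq:generators of FPn} through $\chi_n$, and re-expressing the resulting multitwist in the standard generators $A_{k,l}$ of $P_{2n}(\Sigma_g)$ by means of their multitwist description in \reth{purepres}, produces in each case the loop of the doubled strand(s) around the strand(s) indexed by $i$, together with the framing factor $A_{2(j-g)-1,2(j-g)}$. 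The computation splits into the case $i\le 2g$, where $i$ indexes a handle so that a single doubled strand loops around it and one recovers the three-factor expression for $C_{i,j}$, and the case $i>2g$, where $i$ itself indexes a doubled framed strand, giving the five-factor expression built from the four pairwise interactions of the two doubled pairs plus the framing factor.

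With these two identifications established, applying $\chi_n$ to each relation of Theorem~\ref{presentationframed} turns (PR1--4), (ER1--2), (C) and (FTR) in the letters $B_{i,j},f_k$ into the same relations in the letters $C_{i,j},F_k$, which is exactly the asserted presentation. I expect the main obstacle to be the bookkeeping of the third paragraph: one must verify on the nose that the image multitwist equals the prescribed \emph{ordered} product of the $A_{k,l}$, in particular that the framing factor appears with the correct exponent and that the doubled-strand interactions occur in the order dictated by the definitions of the $C_{i,j}$.
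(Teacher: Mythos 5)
Your proposal is correct and follows essentially the same route as the paper, whose entire proof is the one-line instruction to check that $\chi_n$ sends $B_{i,j}$ to $C_{i,j}$ and $f_k$ to $F_k$; you have simply made explicit the surrounding logic (injectivity of $\chi_n$, identification of the image via \repr{thirdequiv}, transport of the presentation) and sketched the generator computation that the paper leaves to the reader.
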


\begin{proof}
 One has to check that $\chi_n$ sends the generators $B_{i,j}$ and $f_k$ respectively into $C_{i,j}$ and $F_{k}$.
\end{proof}


\section{Framed braids as $2n$-strands braids} \label{section:framesbraidsas2nstrands}

The definition of framed pure braid groups in terms of centralizers of Dehn twists given in~\repr{thirdequiv} allows us to give another equivalent definition
for the framed braid group of a closed surface introduced in Definition~\ref{defframed}.

\begin{deft}
Let $\Sigma$ be a surface of genus $g>1$ with a finite set  $\mathcal{P}$ of $2n$ marked points and let $\mathcal{I}_n$
be the set of arcs $\{[1,2], [3,4], \ldots,[2n-1,2n]\}$. We say that 
an element $\beta$ of $B_{2n}(\Sigma_g)$ preserves with orientation $\mathcal{I}_n$
if for any $[2i-1,2i]$ there exists $j$ such that $[2i-1,2i] * \beta = [2j-1,2j]$ with $\{2i-1\} * \beta = \{2j-1\}$ (and therefore $\{2i\} *  \beta =\{2j\}$).
The  set of braids preserving with orientation  $\mathcal{I}_n$ forms a subgroup of $B_{2n}(\Sigma_g)$  that we will denote by $\widetilde{FB}_{n}(\Sigma_g)$.
\end{deft} 

Let us recall that the group $B_{2n}(\Sigma_g)$
is generated (see~\cite{B}) by the usual generators $\sigma_1, \ldots, \sigma_{2n-1}$ of $B_{2n}$ plus the braids
$a_1, b_1, \ldots, a_g, b_g$ which are the pure braids $A_{1,2g+1},A_{2,2g+1},\ldots,A_{2g,2g+1}$ depicted in~\reth{purepres}.
Let us define 
$$A_i=A_{2i-1,2g+1}A_{2i-1,2g+2}A_{2g+1,2g+2}\ \text{ and }\ 
B_i=A_{2i,2g+1}A_{2i,2g+2}A_{2g+1,2g+2}$$
for $i=1,\ldots,g$ and let $\tau_j = \sigma_{2j}\sigma_{2j-1}\sigma_{2j+1}\sigma_{2j}$ for 
$j=1,\ldots,n-1$. The elements $A_1, B_1, \ldots, A_g, B_g,$   $ 
\tau_1, \ldots, \tau_{n-1},$   $F_{1}, \ldots, F_{n}$,
belong to $\widetilde{FB}_n(\Sigma_{g})$. Moreover we have the following result:

\begin{thm} \label{presentationbraidframed}
Let $\Sigma_{g}$ be a compact, connected, closed, orientable surface of genus $g > 1$. The  group 
$\widetilde{FB}_n(\Sigma_{g})$ admits the following presentation:
\begin{enumerate}
\item[\textbf{Generators:}] $A_1, B_1, \ldots, A_g, B_g, \tau_1, 
\ldots, \tau_{n-1}, F_{1}, \ldots, F_{n}$.
\item[\textbf{Relations:}] 
\begin{gather}
\text{$\tau_i F_j=F_j \tau_i$ for $j\not= i, i+1$}\\
\text{$\tau_iF_i=F_{i+1}\tau_i$}\\
\text{$\tau_iF_{i+1}=F_i\tau_i$}\\
\text{$\tau_i\tau_j=\tau_j\tau_i$ if $\lvert i-j \rvert \geq
2$}\label{eq:artin1}\\
\text{$\tau_i\tau_{i+1}\tau_i= \tau_{i+1}\tau_i \tau_{i+1}$
for all $1\leq i\leq n-2$}\label{eq:artin2}\\
\text{$c_i\tau_j= \tau_j c_i$ for all $j\geq 2$,   $c_i=A_i$ or
  $B_i$ and $i=1, \ldots, g$}\label{eq:asjg}\\
\text{$c_i \tau_1 c_i \tau_1= \tau_1 c_i \tau_1 c_i$   for
  $c_i=A_i$ or $B_i$ and $i=1, \ldots, g$}\label{eq:bsbg}\\
\text{$A_i \tau_1 B_i = \tau_1 B_i \tau_1 A_i \tau_1$ for  $i=1, \ldots, g$}\label{eq:abbag}\\
\text{$c_i \tau_1^{-1}  c_j \tau_1=\tau_1^{-1} c_j \tau_1 c_i$
  for $c_i=A_i$ or $B_i$, $c_j=A_j$ or $B_j$ and $1\le j<i\le g$}\label{eq:cddcg}\\
\text{$\prod_{i=1}^g [A_i^{-1},B_i]= \tau_1\cdots \tau_{n-2} \tau_{n-1}^2 \tau_{n-2} \cdots \tau_1 F_1^{2(g-1)}$}\label{eq:abcommg}.
\end{gather}
\end{enumerate}
\end{thm}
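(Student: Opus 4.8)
The plan is to realise $\widetilde{FB}_n(\Sigma_g)$ as a group extension and to read off its presentation from a known one for the quotient. First I would observe that $\widetilde{FB}_n(\Sigma_g)$ is nothing but the framed braid group $FB_n(\Sigma_g)$ realised inside $B_{2n}(\Sigma_g)$: this is the non-pure analogue of the identification of Proposition~\ref{prop:thirdequiv}, the orientation-preserving permutation of the arcs $\mathcal{I}_n$ playing the role of the permutation of the framed strands. Consequently Proposition~\ref{prop:extension centrale par Zn} applies and furnishes the exact sequence
\[
1\to\mathbb{Z}^{n}\to\widetilde{FB}_n(\Sigma_g)\stackrel{\widehat{\beta}_n}{\to}B_n(\Sigma_g)\to1 ,
\]
in which the kernel is generated by the framings $F_1,\dots,F_n$ (each $F_k=\tau_{\gamma_k}=\sigma_{2k-1,2k}^{2}$) and $\widehat{\beta}_n$ collapses every arc to a point. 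Under $\widehat{\beta}_n$ the elements $A_i$, $B_i$, $\tau_j$ map respectively to the standard generators $a_i$, $b_i$, $\sigma_j$ of $B_n(\Sigma_g)$.

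The core of the argument is then the standard recipe producing a presentation of an extension $1\to K\to G\to Q\to1$ from presentations of $K$ and $Q$: one takes as generators the kernel generators together with a lift of each generator of $Q$, and as relations (i) the relations of $K$, (ii) for each lift $x$ and each kernel generator the rule rewriting $xF_kx^{-1}$ in $K$, and (iii) for each relator of $Q$ the lifted word, set equal to the element of $K$ it represents. I would apply this with $K=\mathbb{Z}^n=\langle F_1,\dots,F_n\rangle$, $Q=B_n(\Sigma_g)$, and Bellingeri's presentation of $B_n(\Sigma_g)$ from~\cite{B}, whose generators are $a_1,b_1,\dots,a_g,b_g,\sigma_1,\dots,\sigma_{n-1}$ and whose relators are exactly the braid relations, the mixed relations, and the single surface relation $\prod_{i=1}^g[a_i^{-1},b_i]=\sigma_1\cdots\sigma_{n-2}\sigma_{n-1}^2\sigma_{n-2}\cdots\sigma_1$.

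Matching the three families is then mostly routine. The conjugation rules (ii) split into a permuting part and a trivial part: since $\tau_j$ interchanges the $j$-th and $(j+1)$-th framed strands it interchanges $F_j$ and $F_{j+1}$ and fixes the rest, yielding $\tau_iF_j=F_j\tau_i$ for $j\neq i,i+1$, together with $\tau_iF_i=F_{i+1}\tau_i$ and $\tau_iF_{i+1}=F_i\tau_i$; on the other hand $A_i,B_i\in FP_n(\Sigma_g)$, where the framings are central, so they commute with every $F_k$, and the $F_k$ commute with one another (these are exactly the kernel relations (i) and the trivial conjugations, i.e. the analogue of relation (C) of Proposition~\ref{presentationpureframedascollectionsofpaths}). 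The braid and mixed relators of $B_n(\Sigma_g)$ lift with trivial kernel value, because each of them involves only braiding that keeps the arcs matched and so creates no framing; this gives the Artin relations and the relations \eqref{eq:asjg}, \eqref{eq:bsbg}, \eqref{eq:abbag}, \eqref{eq:cddcg} verbatim.

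The one relator with a non-trivial kernel value—and the main obstacle—is the surface relation. Lifting it, $\prod_{i=1}^g[A_i^{-1},B_i]$ and $\tau_1\cdots\tau_{n-2}\tau_{n-1}^{2}\tau_{n-2}\cdots\tau_1$ have the same image under $\widehat{\beta}_n$, hence differ by an element of $\langle F_1,\dots,F_n\rangle$, and the whole point is to identify that element as $F_1^{2(g-1)}$, giving \eqref{eq:abcommg}. This is precisely the lantern-relation computation already carried out for the relation (FTR) in the proof of Theorem~\ref{presentationframed}: the iterated lantern relations there show that the commutators of the handle generators accumulate exactly $2(g-1)$ powers of the boundary twist $\tau_{\delta_k}$, and the same computation, read in $\widetilde{FB}_n(\Sigma_g)$ for the first framed strand, produces the factor $F_1^{2(g-1)}$. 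Since Bellingeri's presentation of $B_n(\Sigma_g)$ carries a single surface relator, only the instance $k=1$ is needed here, and I expect this kernel computation to be the one genuinely delicate point; the remaining work is a Tietze comparison checking that the relations produced by the extension recipe are exactly those displayed.
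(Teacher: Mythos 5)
Your proposal is correct in outline but follows a genuinely different decomposition from the paper's. The paper presents $\widetilde{FB}_n(\Sigma_g)$ via the extension $1\to FP_n(\Sigma_g)\to\widetilde{FB}_n(\Sigma_g)\to\sn\to1$ (the permutation of the arcs of $\mathcal{I}_n$), taking the already-established presentation of the kernel from Proposition~\ref{presentationpureframedascollectionsofpaths}, lifting the Coxeter relations of $\sn$ via the coset representatives $\tau_1,\ldots,\tau_{n-1}$, and then deleting most of the $C_{i,j}$ generators because they are conjugates under words in the $\tau_j$'s. You instead use the extension $1\to\Z^{n}\to\widetilde{FB}_n(\Sigma_g)\to B_n(\Sigma_g)\to1$ of Proposition~\ref{prop:extension centrale par Zn}, with the tiny kernel $\langle F_1,\ldots,F_n\rangle$ and Bellingeri's presentation of $B_n(\Sigma_g)$ as the quotient, so that the only genuinely nontrivial lifting computation is the single surface relator, whose kernel value $F_1^{2(g-1)}$ you correctly trace back to the lantern-relation calculation of Theorem~\ref{presentationframed} (only the case $k=1$ being needed). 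Your route buys a much smaller kernel and essentially no generator elimination, but it requires the identification $\widetilde{FB}_n(\Sigma_g)\cong FB_n(\Sigma_g)$ up front (Proposition~\ref{framedbraidsascollectionofpaths}; this appears later in the paper but is proved independently, so there is no circularity), whereas the paper works directly with $\widetilde{FB}_n(\Sigma_g)$ inside $B_{2n}(\Sigma_g)$. Two points you should tighten: the extension recipe also produces the relations $[c_i,F_k]=1$ and $[F_j,F_k]=1$, which are not in the displayed list, so your final Tietze comparison must actually derive them from (or show them redundant given) the displayed relations rather than just ``check the lists match''; and the assertion that the mixed relators \eqref{eq:asjg}--\eqref{eq:cddcg} lift with trivial framing is a computation in $B_{2n}(\Sigma_g)$ that deserves verification, not only the geometric heuristic that ``no framing is created''.
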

\begin{proof}
If $\pi: \widetilde{FB}_n(\Sigma_{g}) \to \sn$ is the map which associates to any $\beta\in \widetilde{FB}_n(\Sigma_{g}) $
the corresponding permutation on the set $\mathcal{I}_n$, we have the following exact sequence:
$$1\to FP_n(\Sigma_{g}) \to \widetilde{FB}_n(\Sigma_{g}) \to \sn \to 1.$$
Then, the statement follows by
Proposition~\ref{presentationpureframedascollectionsofpaths} and, as already done in~Theorem~\ref{presentationframed}, by another 
application of the technique
from~\cite{J} that we leave to the reader; we simply remark that
$ \tau_1, \ldots, \tau_{n-1}$ are coset representative of the usual generators of $\sn$ and the other generators of $FP_{n}(\Sigma_g)$ can be
deleted since they are conjugated by  words in  $ \tau_1, \ldots, \tau_{n-1}$.

\end{proof}

Before proving that $ \widetilde{FB}_n(\Sigma_{g}) $ is isomorphic to the framed braid group  $FB_n(\Sigma_{g})$ defined in Definition~\ref{defframed} 
let us recall few additional notations and results from~\cite{belcen}.

\begin{deft} \label{def1}
 We define a \emph{ribbon} as an embedding
             $$
             R: [0,1] \times [0,1] \to \Sigma \times [0,1] \, ,
             $$
             such that $R(s,t) $ is in $ \Sigma \times \{t\}$. 

Let  $A$ be a  $(j,k)$-arc in $\Sigma \times  \{0\}$. Then the isotopy corresponding
        to $\beta \in B_n(\Sigma_g)$ moves $A$ through a ribbon which is \emph{proper} for $\beta$, meaning that
        \begin{itemize}
               \item $R(0,t)$ and $R(1,t)$ trace out the strands $j$ and $k$ of the
                 braid $\beta$, while the rest of the ribbon is disjoint from $\beta$;
               \item $R([0,1]\times \{0\})=A$    and $R([0,1]\times \{1\})=A*\beta$.
        \end{itemize}
\end{deft}

\begin{deft} \label{def2}
      We say that the braid $\beta $ in $B_{2n}(\Sigma)$ has a $(2j-1,2k-1)$-band  if there exists a
      ribbon proper for $\beta$ and connecting $[2j-1,2j] \times \{0\}$ to $[2k-1,2k] \times\{1\}$.
\end{deft}

 \begin{prop} \cite[Proposition 2.2]{belcen} \label{prop:braidcaracter}
  Let $g>1$ and $1\le j<k\le n$. For each $\beta$ in $B_{2n}(\Sigma_g)$, the following properties are equivalent:
      \begin{enumerate}
        \item $\beta$ has a $(2j-1,2k-1)$-band, 
        \item $[2j-1,2j]*\beta=[2k-1,2k]  $.
      \end{enumerate}
  \end{prop}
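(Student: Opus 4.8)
The plan is to pass between $\beta$ and an ambient isotopy realising it, and to recognise a $(2j-1,2k-1)$-band as the ribbon swept out by the arc $[2j-1,2j]$ along that isotopy. So fix a geometric representative of $\beta$: an isotopy $(h_t)_{t\in[0,1]}$ of $\Sigma_g$ with $h_0=\mathrm{Id}$, whose strands $t\mapsto(h_t(p_i),t)$ are pairwise disjoint and realise $\beta$, and with $h_1$ inducing the marked-point permutation of $\beta$. By the right-action convention of the paper, $A*\beta$ is the isotopy class of $h_1(A)$ seen in $\Sigma_g\times\{1\}\cong\Sigma_g$.

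For $(b)\Rightarrow(a)$, set $A=[2j-1,2j]$ and form the trace $R(s,t)=(h_t(A(s)),t)$. Since the second coordinate records $t$ and each $h_t$ is a diffeomorphism, $R$ is automatically an embedding; its side edges $R(0,t)$ and $R(1,t)$ are precisely the strands issuing from $p_{2j-1}$ and $p_{2j}$, and as the interior of $A$ avoids the marked points, $R((0,1)\times[0,1])$ meets no strand. Hence $R$ is a ribbon proper for $\beta$ in the sense of Definition~\ref{def1}, with bottom $[2j-1,2j]$ and top $h_1(A)=A*\beta$. By hypothesis $A*\beta$ is isotopic to $[2k-1,2k]$; absorbing this isotopy into $(h_t)$ near $t=1$ makes the top equal $[2k-1,2k]$, which is exactly a $(2j-1,2k-1)$-band.

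For $(a)\Rightarrow(b)$, let $R$ be a ribbon proper for $\beta$ with bottom $[2j-1,2j]$ and top $[2k-1,2k]$. The point is that any proper ribbon is, up to isotopy, the trace of its bottom arc: restricting the height function $t$ to $R$, putting it in general position and levelling the band, each slice $R(\cdot,t)$ becomes an arc carried by $h_t$, so that the top of $R$ is $[2j-1,2j]*\beta$. With the hypothesis this gives $[2j-1,2j]*\beta=[2k-1,2k]$. The main obstacle is precisely this levelling, together with the well-definedness of the action: one must know that two isotopies realising the same $\beta$ act identically on isotopy classes of arcs, and that an abstract proper ribbon can be made level-preserving without altering the class of its top. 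Both use the asphericity of $\Sigma_g$ and of the relevant configuration spaces available for $g>1$, after which the equivalence follows by unwinding Definitions~\ref{def1} and~\ref{def2}.
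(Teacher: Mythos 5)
First, a point of comparison: the paper offers no proof of this statement at all --- it is imported verbatim from \cite[Proposition~2.2]{belcen} --- so there is no in-paper argument to measure yours against; I can only assess your argument on its own terms.

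Your direction $(b)\Rightarrow(a)$ is correct and complete: the trace $R(s,t)=(h_t(A(s)),t)$ of the arc under an ambient isotopy realising $\beta$ is an embedded, level-preserving ribbon proper for $\beta$ with top $A*\beta$, and the final arc isotopy onto $[2k-1,2k]$ can be appended. The gap is in $(a)\Rightarrow(b)$, and it is twofold. First, the ``levelling'' you present as the main obstacle is vacuous: by Definition~\ref{def1} a ribbon already satisfies $R(s,t)\in\Sigma\times\{t\}$, so every proper ribbon is level-preserving by fiat and there is nothing to put in general position. Second, and more seriously, the actual content of this direction --- that an \emph{arbitrary} proper ribbon with bottom $[2j-1,2j]$ has top isotopic to the \emph{traced} top $[2j-1,2j]*\beta$ --- is asserted (``each slice becomes an arc carried by $h_t$'') rather than proved, and the appeal to asphericity is not the mechanism that closes it. The argument that does close it is elementary: writing $a_t=R(\cdot,t)$ for the slices of the given ribbon, the pulled-back family $b_t=h_t^{-1}(a_t)$ is a continuous family of $(2j-1,2j)$-arcs in $(\Sigma_g,\mathcal{P})$ (its endpoints are $p_{2j-1},p_{2j}$ since the side edges trace those strands, and it avoids the other marked points because the ribbon is disjoint from the other strands), with $b_0=[2j-1,2j]$; hence $b_1$ is isotopic to $[2j-1,2j]$ by the paper's definition of arc isotopy, and applying $h_1$ gives $[2k-1,2k]=a_1=h_1(b_1)\simeq h_1(b_0)=[2j-1,2j]*\beta$. (Contractibility of $\diff{\Sigma_g}$ for $g>1$ is what makes the lift $h_t$, and hence the action $*\beta$, well defined; that is the only place the genus hypothesis enters.) As written, your treatment of the hard direction is a statement of intent rather than an argument, so the proof is incomplete without the pull-back step above.
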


\begin{prop} \label{framedbraidsascollectionofpaths}
The group  $ \widetilde{FB}_n(\Sigma_{g}) $ is isomorphic to the 
framed braid group\linebreak[4] $FB_n(\Sigma_{g})$ defined in Definition~\ref{defframed}.
\end{prop}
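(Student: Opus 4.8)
The plan is to realise both $FB_n(\Sigma_g)$ and $\widetilde{FB}_n(\Sigma_g)$ as extensions of $\sn$ by $FP_n(\Sigma_g)$ and to exhibit an explicit isomorphism between these extensions, concluding by the five lemma. On one side, the regular covering $F_n(\Sigma_g)\to\widehat{F_n}(\Sigma_g)$ gives, as in Definition~\ref{defframed}, the short exact sequence
$$1\to FP_n(\Sigma_g)\to FB_n(\Sigma_g)\stackrel{q}{\to}\sn\to 1,$$
where $q$ records the permutation of the framed punctures. On the other side, the permutation map $\pi$ on the arcs $\mathcal{I}_n$ used in the proof of Theorem~\ref{presentationbraidframed} gives
$$1\to FP_n(\Sigma_g)\to\widetilde{FB}_n(\Sigma_g)\stackrel{\pi}{\to}\sn\to 1.$$

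First I would pin down the kernel of $\pi$. An element $\beta\in\ker\pi$ is a $2n$-braid fixing each arc $[2i-1,2i]$ with orientation, so in particular $\{2i-1\}*\beta=\{2i-1\}$ and $\{2i\}*\beta=\{2i\}$ for all $i$; hence $\beta\in P_{2n}(\Sigma_g)$. By Corollary~\ref{centrcor} the condition $[2i-1,2i]*\beta=[2i-1,2i]$ is equivalent to $\tau_{\gamma_i}\beta=\beta\tau_{\gamma_i}$, so $\ker\pi=\bigcap_{j=1}^{n}C_{P_{2n}(\Sigma_g)}(\tau_{\gamma_j})$, which by \repr{thirdequiv} is exactly $\chi_n\bigl(FP_n(\Sigma_g)\bigr)\cong FP_n(\Sigma_g)$. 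Thus the two kernels are canonically identified via $\chi_n$, and both quotients are $\sn$.

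Next I would build the comparison map $\Theta\colon FB_n(\Sigma_g)\to\widetilde{FB}_n(\Sigma_g)$ by \emph{doubling along the framing}: represent each framed strand $b_i$ by a pair of parallel strands labelled $2i-1$ and $2i$, obtained by pushing $p_i$ a short distance in the two directions determined by a fixed rotation of the tangent vector $v_i$, so that the pair sweeps out a thin ribbon with spine $b_i$. In the language of Definitions~\ref{def1} and~\ref{def2} this ribbon is proper, so $\Theta(\beta)$ has a band for each arc of $\mathcal{I}_n$ and, by \repr{braidcaracter}, preserves $\mathcal{I}_n$ with orientation; hence $\Theta(\beta)\in\widetilde{FB}_n(\Sigma_g)$, with inverse given by collapsing each band to its spine. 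I claim $\Theta$ is a well-defined homomorphism inducing the identity on $\sn$ and coinciding with $\chi_n$ on the kernel: a full rotation $f_k$ of the $k$th framing becomes a full twist of the two strands of the $k$th band around each other, that is $\sigma_{2k-1,2k}^{2}=\tau_{\gamma_k}=F_k$, in agreement with the identification of Proposition~\ref{presentationpureframedascollectionsofpaths}.

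Granting this, the square whose rows are the two sequences, whose left vertical is $\chi_n$ and whose right vertical is the identity of $\sn$, commutes, and the five lemma makes $\Theta$ an isomorphism. The only genuinely delicate point — the main obstacle — is the geometric verification that doubling is well defined up to braid isotopy (independently of the thickening parameter and of the homotopy representative of the framed braid) and is multiplicative; the ribbon formalism together with the band criterion of \repr{braidcaracter} are precisely what reduce this to an isotopy of proper ribbons. Once this is in place, checking $\Theta|_{FP_n(\Sigma_g)}=\chi_n$ on generators is routine from the explicit formulas defining the $C_{i,j}$ and the $F_k$.
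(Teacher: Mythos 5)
Your proposal is correct and follows essentially the same route as the paper: you realise a framed braid as a $2n$-strand braid by doubling each strand into a ribbon and invoke the band criterion of \repr{braidcaracter} to identify the image with $\widetilde{FB}_n(\Sigma_g)$, which is exactly the paper's (much terser) argument. The extension-of-$\sn$ and five-lemma scaffolding you add is harmless extra bookkeeping around the same key construction and the same key lemma, and you correctly flag the well-definedness of the doubling map as the point the paper also leaves implicit.
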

\begin{proof}
By its definition an element $\beta \in FB_n(\Sigma_{g})$ can be seen 
as an element\linebreak[4] $\beta' \in B_{2n}(\Sigma_{g})$ such that for 
any $i=1,\ldots,n$ there exists a $(2i-1,2k-1)$-band and   $\{2i-1\}* \beta' = \{2k-1\}$.
This defines a morphism from $FB_n(\Sigma_{g})$ to 
$\widetilde{FB}_n(\Sigma_{g})$ wich is an isomorphism by~\repr{braidcaracter}.
\end{proof}


\section{Framed braids on the torus}

In the case of the torus the proposed definitions are not equivalent and let arise different notions of framings.

Let  $F_{n}(\Sigma)$ and $\widehat{F_{n}}(\Sigma)$ be the spaces defined at the beginning of the Section \ref{section:definitions}. We recall 
Definition \ref{defframed} in the case of $\Sigma=\T$.

\begin{deft}\label{defframed2}
The \emph{pure framed braid group} $FP_{n}(\T)$ on $n$ strands of $\T$ is the fundamental group of 
$F_{n}(\T)$. The \emph{framed braid group} on $n$ strands of $\T$ is the fundamental group of 
$\widehat{F_{n}}(\T)$.
\end{deft}

\begin{deft}\label{def:capping}
We denote by $\widetilde{FP}_n(\T)$ the kernel of the morphism 
$\lambda_{1}^{n}:\Gamma_{1,n}\to\Gamma_{1}$ induced by capping $n$ boundary components.
\end{deft}

In what follows we prove that $\widetilde{FP}_n(\T)$ is a quotient of $FP_n(\T)$
and we examine exact sequences for $FP_n(\T)$ and $\widetilde{FP}_n(\T)$.

\begin{thm} \label{th:extension centrale par Zn cas pure tore}
The group $FP_n(\T)$ is isomorphic to $\Z^{n}\times P_{n}(\T)$
and  $F_{n}(\T)$ is an Eilenberg-Maclane space of type $(FP_{n}(\T),1)$.
\end{thm}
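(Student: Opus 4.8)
The plan is to mimic the proof of Theorem~\ref{th:extension centrale par Zn cas pure} part~1, which handled the bounded case via parallelizability. The key observation is that the torus $\T$ is a parallelizable surface: its unit tangent bundle $U\T$ is homeomorphic to the trivial bundle $S^{1}\times\T$. Indeed, $\T$ admits a nowhere-vanishing tangent vector field (for instance the one induced by its Lie group structure, or simply the vertical field coming from the product structure $\T=S^{1}\times S^{1}$), so $U\T\cong S^{1}\times\T$. This is precisely the property that drove the bounded case, and the torus shares it despite being closed, which is exactly why its framed braid groups behave differently from the closed surfaces of genus $g\ge2$ treated in Theorem~\ref{th:extension centrale par Zn cas pure} part~2.

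From the global trivialization $U\T\cong S^{1}\times\T$, I would deduce a homeomorphism of configuration spaces. Recall that $F_{n}(\T)=(\pi^{n})^{-1}(C_{n}(\T))$ sits inside $(U\T)^{n}$. Under the trivialization each fibre factor splits off an $S^{1}$, and since the constraint defining $F_{n}(\T)$ only involves the base-point coordinates (condition~3 in the description after Definition~\ref{defframed}, namely $\pi b_{i}(t)\neq\pi b_{j}(t)$), the circle factors are unconstrained. This yields a homeomorphism
\begin{equation*}
F_{n}(\T)\;\cong\;(S^{1})^{n}\times C_{n}(\T).
\end{equation*}
Taking fundamental groups and using that $\pi_{1}$ of a product is the product of the $\pi_{1}$'s gives
\begin{equation*}
FP_{n}(\T)=\pi_{1}\bigl(F_{n}(\T)\bigr)\cong\Z^{n}\times\pi_{1}\bigl(C_{n}(\T)\bigr)=\Z^{n}\times P_{n}(\T),
\end{equation*}
which is the first assertion.

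For the Eilenberg--MacLane statement, I would argue exactly as in the bounded case: $S^{1}$ is a $K(\Z,1)$, and $C_{n}(\T)$ is aspherical since it is known to be a $K(P_{n}(\T),1)$ (this is the statement, cited to~\cite{PR1}, that $C_{n}(\Sigma_{g,b})$ is an Eilenberg--MacLane space, which holds for the torus). A finite product of aspherical spaces is aspherical, so $F_{n}(\T)\cong(S^{1})^{n}\times C_{n}(\T)$ is a $K\bigl(FP_{n}(\T),1\bigr)$.

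The only genuine subtlety, and the step I expect to require the most care, is justifying that the trivialization of $U\T$ really descends to the \emph{product} homeomorphism $F_{n}(\T)\cong(S^{1})^{n}\times C_{n}(\T)$ rather than merely a fibre bundle which happens to be trivial. The point to verify is that a \emph{global} section of $U\T\to\T$ — which exists precisely because $\T$ is parallelizable — trivializes $\pi:U\T\to\T$ compatibly over all of $\T$ at once, so the induced trivialization of $(U\T)^{n}\to\T^{n}$ restricts to a trivialization over the open subset $C_{n}(\T)$; the base-point coordinates are never forced to coincide, so no fibre directions get entangled. This is entirely parallel to the closed-surface fibration $F_{n}(\Sigma_{g})\to C_{n}(\Sigma_{g})$ used in Theorem~\ref{th:extension centrale par Zn cas pure}, except that there the bundle was only locally trivial, whereas here global triviality of $U\T$ upgrades it to a genuine product and collapses the long exact sequence of homotopy groups into the split form giving the direct product.
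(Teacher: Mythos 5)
Your proof is correct and follows exactly the paper's route: the paper's own proof simply observes that $\T$ is parallelizable and invokes the argument of Theorem~\ref{th:extension centrale par Zn cas pure} for the bounded case, i.e.\ the trivialization $U\T\cong S^{1}\times\T$ gives $F_{n}(\T)\cong(S^{1})^{n}\times C_{n}(\T)$, from which both assertions follow as you describe. Your extra care about the global trivialization descending to the configuration space is a welcome elaboration but not a departure from the paper's argument.
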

\begin{proof}
Since  $\T$ is parallelizable, the proof of Theorem \ref{th:extension centrale par Zn cas pure tore} 
is the same as in the case of surfaces with boundary given  in Theorem \ref{th:extension centrale par Zn cas pure}.
\end{proof}

\begin{thm}\label{th:framedsequence tore}
For $n\geq 0$ and $m\geq 0$, one has the following splitting exact sequence:
\begin{equation*}
(FSPB) \qquad 1\to FP_{m}\left(\T \setminus n \; \mbox{discs} \;\right)\to 
FP_{n+m}(\T)\stackrel{\alpha_{n,m}}{\to}FP_{n}(\T)\to 1
\end{equation*}
where $\alpha_{n,m}$ consists in forgetting the first $m$ strands.
\end{thm}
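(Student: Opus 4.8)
The statement is the torus analogue of Theorem~\ref{th:framedsequence}, so the plan is to imitate that argument as far as possible while keeping track of where the torus forces a modification. The essential ingredient in the higher-genus case was the identification of $FP_{k}(\Sigma_{g,b})$ with the kernel of a capping morphism between mapping class groups (Corollary~\ref{cor:capping}), which let one build a $3\times 3$ commutative diagram of exact sequences whose top row was the desired $(FSPB)$ sequence. On the torus this identification is no longer valid: by Theorem~\ref{th:extension centrale par Zn cas pure tore} one has instead the honest splitting $FP_{k}(\T)\cong\Z^{k}\times P_{k}(\T)$, and the capping kernel $\widetilde{FP}_{k}(\T)$ of Definition~\ref{def:capping} is only a quotient of $FP_{k}(\T)$. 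So I would not route the proof through mapping class groups at all, but work directly with configuration spaces.

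\textbf{First step.} I would exploit the parallelizability of $\T$, exactly as in Theorem~\ref{th:extension centrale par Zn cas pure tore}. Since $U\T\cong S^{1}\times\T$, the space $F_{n+m}(\T)$ is homeomorphic to $(S^{1})^{n+m}\times C_{n+m}(\T)$, and similarly at each intermediate level. Thus on the level of fundamental groups the framing factor $\Z^{n+m}$ splits off as a direct factor throughout, and the sequence to be established decomposes as the product of the torus $(SPB)$ sequence
\[
1\to P_{m}\bigl(\T\setminus n\ \text{discs}\bigr)\to P_{n+m}(\T)\to P_{n}(\T)\to 1
\]
with the trivially-split sequence of framing groups $1\to\Z^{m}\to\Z^{n+m}\to\Z^{n}\to 1$. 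The underlying $(SPB)$ sequence for the torus comes, as recalled in the Introduction, from the homotopy long exact sequence of the fibration $C_{n+m}(\T)\to C_{n}(\T)$ obtained by forgetting the first $m$ points; exactness holds because the relevant configuration spaces are aspherical and the higher homotopy groups vanish.

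\textbf{Second step: splitting.} For the splitting I would produce a geometric section of $\alpha_{n,m}$, the map forgetting the first $m$ framed strands. Because $\T$ is a closed surface one cannot simply ``add strands at infinity'' as in the bounded case; instead I would cable the first framed strand, paralleling the section $\iota_{*\mid}$ constructed in Theorem~\ref{th:framedsequence}. Concretely, take a small embedded disc around the first marked point, insert $m$ additional marked points together with their framing vectors inside it running parallel to the first strand, and extend framed braids on the remaining $n$ points trivially; this defines a homomorphism $FP_{n}(\T)\to FP_{n+m}(\T)$ which is a right inverse to $\alpha_{n,m}$. Compatibility with the product decomposition above is immediate since cabling respects both the configuration and the framing coordinates.

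\textbf{Main obstacle.} The genuinely delicate point is the exactness of the torus $(SPB)$ sequence, and correspondingly the identification of the fibre: over the torus the fibration-theoretic argument requires that the base and total configuration spaces be aspherical, which is true for $\T$ (unlike for $S^{2}$, where the $(SPB)$ sequence only holds for $n+m\ge 4$ and the center intervenes). I would therefore need to check carefully that $C_{k}(\T)$ is a $K(\pi,1)$ and that the fibre of the forgetful fibration is $C_{m}(\T\setminus n\ \text{points})$, so that its framed version has $\pi_{1}$ equal to $FP_{m}(\T\setminus n\ \text{discs})$; note that capping each removed disc versus deleting a point is what distinguishes $\T\setminus n\ \text{discs}$ from $\T$ with $n$ punctures, and the framing bundle is still trivial over this bounded surface, so Theorem~\ref{th:extension centrale par Zn cas pure} applies to the fibre. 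Once the fibre and asphericity are pinned down, the product structure makes both exactness and splitting formal, so the bulk of the work is this identification rather than any new algebra.
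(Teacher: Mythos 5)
Your proof is correct, and its core reduction is exactly the paper's: both use the parallelizability of $\T$ (as in Theorem~\ref{th:extension centrale par Zn cas pure tore}) to split off the framing factor $\Z^{k}$ as a direct factor at every level, so that the $(FSPB)$ sequence becomes the product of the torus $(SPB)$ sequence (exact by Fadell--Neuwirth plus asphericity of $C_{k}(\T)$) with the trivially split sequence $1\to\Z^{m}\to\Z^{n+m}\to\Z^{n}\to 1$. The one place you genuinely diverge is the splitting: the paper simply quotes~\cite{GG} for the fact that the torus $(SPB)$ sequence splits, whereas you construct a section of $\alpha_{n,m}$ directly by cabling the first framed strand, in the spirit of the section $\iota_{*\mid}$ of Theorem~\ref{th:framedsequence} but carried out at the level of framed braids in configuration spaces rather than via mapping class groups --- a sensible adjustment, since on the torus the capping kernel $\widetilde{FP}_{n}(\T)$ of Definition~\ref{def:capping} is only a quotient of $FP_{n}(\T)$, so the higher-genus diagram argument is not available verbatim. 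Your route is self-contained and makes the geometric reason for the splitting explicit; the paper's is shorter but imports the splitting from the literature. Both are valid, and your identification of the fibre ($\T$ minus $n$ points versus $n$ discs, with the tangent bundle still trivial over the bounded surface) fills in a point the paper leaves implicit.
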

\begin{proof}
Because of Theorem \ref{th:extension centrale par Zn cas pure tore} 
the (FSPB) sequence reduces to  the exact sequence induced by Fadell-Neuwirth fibration
\begin{equation*}\label{eq:pure braid sequence2}
(SPB) \quad 1\to P_{m}\left(\T \setminus n \; \mbox{points} \;\right)\to P_{n+m}(\T)\to 
P_{n}(\T)\to 1
\end{equation*}
which is a splitting sequence (\cite{GG}).
\end{proof}

Let us denote $ \widetilde{P}_{n}(\T)$  the kernel of the morphism 
$\phi_{n}: \mathrm{P}\Gamma_{1}^n \to\Gamma_{1}$ induced by forgetting the $n$ marked points.
We recall that, according to \repr{braids as mapping classes}, one has that  $  \widetilde{P}_{n}(\T) \simeq  P_{n}(\T)/Z(P_{n}(\T))$.

\begin{thm} \label{th:purepres2}
 The group $\widetilde{P}_n(\T)$ admits the following
presentation:
\begin{enumerate}
\item[\textbf{Generators:}] $\{A_{i,j}\; | \;1 \le i \le n+1, 3\le j \le n+2, i<j \}.$
\item[\textbf{Relations:}]
\begin{multline*}
       \text{(PR1)}\ A_{i,j}^{-1} A_{r,s} A_{i,j} = A_{r,s}\ \mbox{if} \, \,(i<j<r<s) \; \mbox{or}\ (r+1<i<j<s),\\
\shoveright{\mbox{or}\ (i=r+1<j<s\ \mbox{ or }\  r>2 )\,;}\\
\shoveleft{\text{(PR2)}\  A_{i,j}^{-1} A_{j,s} A_{i,j} = A_{i,s} A_{j,s} A_{i,s}^{-1}\ \mbox{ if }\ (i<j<s)\,;}\\
\shoveleft{\text{(PR3)}\ A_{i,j}^{-1} A_{i,s} A_{i,j} = A_{i,s}  A_{j,s} A_{i,s} A_{j,s}^{-1} A_{i,s}^{-1}\ \mbox{ if }\ (i<j<s)\,;}\\
\shoveleft{\text{(PR4)}\ A_{i,j}^{-1}A_{r,s}A_{i,j}=A_{i,s}A_{j,s}A_{i,s}^{-1}A_{j,s}^{-1}A_{r,s} A_{j,s}A_{i,s}A_{j,s}^{-1}A_{i,s}^{-1}}\ \mbox{ if }\ (i+1<r<j<s)\\
\shoveright{\mbox{or }\ (i+1=r<j<s\ \mbox{ for odd }\  \mbox{ or }\ r>2 )\, ;}\\
\shoveleft{\text{(ER1)}\ A_{2,j}^{-1} A_{1,s} A_{2,j}=A_{1,s}  A_{2,s} A_{j,s}^{-1} A_{2,s}^{-1}}  \\
\shoveleft{\text{(ER2)}\ A_{1,j}^{-1} A_{2,s} A_{1,j}= A_{1,s} A_{j,s} A_{1,s}^{-1} A_{2,s} A_{j,s} A_{1,s} A_{j,s}^{-1}A_{1,s}^{-1}}\\
\shoveleft{\text{(TR)}\  [A_{2,2+k}^{-1},A_{1,2+k}]=A_{3,2+k}\cdots A_{1+k,2+k} A_{2+k,3+k}\cdots A_{2+k,2+n}}\  (1\leq k\leq n)\\
\shoveleft{(1\leq k\leq n, \; \mbox{with the notation} \;  A_{3,3}=A_{2+n,2+n}=1)}\\
\shoveleft{\text{(QR1)}\ A_{1,3}\cdots A_{1,2+n}=1}\\
\shoveleft{\text{(QR2)}\ A_{2,3}\cdots A_{2,2+n}=1}\\
\end{multline*}
\end{enumerate}
\end{thm}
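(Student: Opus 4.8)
The plan is to realise $\widetilde{P}_n(\T)$ as the quotient of the full pure braid group $P_n(\T)$ by its centre and to read off the presentation from a known presentation of $P_n(\T)$. The isomorphism $\widetilde{P}_n(\T)\simeq P_n(\T)/Z(P_n(\T))$ has already been recorded just above via \repr{braids as mapping classes}, and the centre $Z(P_n(\T))$ is free abelian of rank two (see \cite{PR1}). Thus the whole task splits into three pieces: first, writing down a presentation of $P_n(\T)$; second, exhibiting explicit central elements that generate $Z(P_n(\T))$; and third, passing to the quotient.

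For the first piece I would specialise \reth{purepres} to $g=b=1$, which yields a presentation of $P_n(\Sigma_{1,1})$ on the generators $A_{i,j}$ with $1\le i\le n+1$, $3\le j\le n+2$, $i<j$, subject to the conjugation relations (PR1--4) and (ER1--2). Passing from $\Sigma_{1,1}$ to the closed torus is done, as in Section~\ref{section:presframed} (following \cite{B}), by adjoining the capping relations (TR); for $g=1$ these become exactly the relations (TR) of the statement. Writing out the index conditions in the case $g=1$ (so that $2g=2$) is the routine bookkeeping that produces the precise forms of (PR1--4) and (ER1--2) listed above.

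For the remaining two pieces I would set $u=A_{1,3}A_{1,4}\cdots A_{1,n+2}$ and $v=A_{2,3}A_{2,4}\cdots A_{2,n+2}$. Geometrically, $u$ (resp.\ $v$) is the braid in which each of the $n$ strands runs once around the first (resp.\ second) generating loop of $\T$; since $\pi_1(\T)$ is abelian such a simultaneous translation commutes with every generator $A_{i,k}$, so $u$ and $v$ are central. By \cite{PR1,GG} these two elements generate the rank-two free abelian centre $Z(P_n(\T))$. As $Z(P_n(\T))=\langle u,v\rangle$ is central, it equals its own normal closure, hence $P_n(\T)/Z(P_n(\T))$ is presented by the generators and relations of $P_n(\T)$ together with the two relations $u=1$ and $v=1$; these are precisely (QR1) and (QR2). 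Combined with the first piece, this gives the asserted presentation.

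The one genuinely non-formal step is the identification of the centre: one must check both that $u,v$ are central and that they generate the \emph{entire} centre, so that quotienting by (QR1)--(QR2) removes $Z(P_n(\T))$ and nothing more. Once the centre is pinned down as $\langle u,v\rangle\cong\Z^2$, the rest is the standard fact that dividing a group by a central subgroup amounts to adjoining generators of that subgroup as relators, together with the purely mechanical specialisation of the genus-$g$ relations of \reth{purepres} to $g=1$.
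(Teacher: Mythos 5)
Your proposal is correct and follows essentially the same route as the paper: the paper likewise obtains the presentation of $P_n(\T)$ from \reth{purepres} (specialised to $g=b=1$, with the (TR) relations adjoined for the closed case) and then quotients by the centre, which is generated by $A_{1,3}\cdots A_{1,2+n}$ and $A_{2,3}\cdots A_{2,2+n}$ by~\cite{PR1}, yielding (QR1) and (QR2). Your additional remarks on why a central subgroup equals its normal closure and on the geometric reason $u,v$ are central are correct elaborations of the same argument.
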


\begin{proof}
It suffices to remark that the previous presentation is the presentation for the group $P_n(\T)$
given in \reth{purepres}, quotiented by 
(QR1) and (QR2) relations, where  $A_{1,3}\cdots A_{1,2+n}, A_{2,3}\cdots A_{2,2+n}$ generate the center of
$P_n(\T)$ (see for instance \cite{PR1}).
\end{proof}

One has the following commutative 
diagram with exact rows and columns:

$$
\xymatrix{%
&&1 \ar[d]  & 1 \ar[d]& \\
1 \ar[r] & \Z^{n} \ar[r]\ar@{=}[d] & \widetilde{FP}_{n}(\T) \ar[r] \ar[d] & \widetilde{P}_{n}(\T) \ar[r] \ar[d] &
1 \ \ \ \ \ \ \ \ \ 
(\ref{eq:pure framed braid and pure braid})\\
1 \ar[r] & \Z^{n} \ar[r] &\Gamma_{1,n} \ar[d]^{\Phi_{n}}  \ar[r] & \mathrm{P}\Gamma_{1}^{n} 
\ar[d]^{\varphi_{n}} \ar[r] & 1 \ \ \ \ \ \ \ \ \ (\arabic{equation})\\
&& \Gamma_{1} \ar@{=}[r]\ar[d] & \Gamma_{1}\ar[d]\\
&& 1  & 1
}$$

One can repeat word by word the proof of Theorem  \ref{presentationframed}
and verify that (QR1) and (QR2) relations lift to $\widetilde{FP}_{n}(\T)$. Since $g=1$
one deduces then also (TR) relation lifts to $\widetilde{FP}_{n}(\T)$ and therefore that 
the natural map from $\widetilde{P}_{n}(\T)$ to  $\widetilde{FP}_{n}(\T)$ is actually a section.
Therefore the fact that the group $\Z^{n} $ generated by Dehn twists around boundary components is central 
implies the following result.

\begin{prop}\label{prop:thirdequiv3}
The group  $\widetilde{FP}_{n}(\T)$  is isomorphic to $\Z^n \oplus \widetilde{P}_{n}(\T)$. 
\end{prop}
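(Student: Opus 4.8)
The plan is to exhibit an explicit splitting of the central extension
\begin{equation*}
1\to \Z^{n}\to \widetilde{FP}_{n}(\T)\to \widetilde{P}_{n}(\T)\to 1
\end{equation*}
appearing as the top row of the commutative diagram just established, and then to observe that a split central extension with an abelian kernel sitting centrally is a direct sum. Since the kernel $\Z^{n}$ is generated by the Dehn twists $\tau_{\delta_{1}},\dots,\tau_{\delta_{n}}$ around the $n$ boundary components (capped by disks), and these are central in $\Gamma_{1,n}$ by the standard fact used in \reth{extension centrale par Zn cas pure}, the kernel is automatically central in $\widetilde{FP}_{n}(\T)$. Thus the only thing that remains is to produce the section.

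The heart of the argument is the remark preceding the statement: I would run the proof of \reth{presentationframed} verbatim in the genus $g=1$ case. There one lifts each defining relation of the quotient group to the larger group by expressing the relevant products of generators $B_{i,j}$ in terms of Dehn twists and applying the lantern relations. First I would check that the relations \text{(QR1)} and \text{(QR2)} of \reth{purepres2}, which cut $\widetilde{P}_{n}(\T)$ out of $P_{n}(\T)$, lift to $\widetilde{FP}_{n}(\T)$ without picking up any factor of $\tau_{\delta_{k}}$. Then I would re-examine the computation producing \req{abcommg}-type identities: in the closed case the lift of the \text{(TR)} relation acquired a factor $f_{k}^{2(g-1)}$, coming from the exponent $2(g-i)+1$ in \req{left member} compared against \req{right member}. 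The crucial numerical observation is that when $g=1$ this twist-exponent discrepancy vanishes, i.e.\ $2(g-1)=0$, so the \text{(TR)} relation lifts \emph{without} any $\tau_{\delta_{k}}$ correction. Consequently every defining relation of $\widetilde{P}_{n}(\T)$ lifts to an identical relation in $\widetilde{FP}_{n}(\T)$ among the lifted generators.

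Given this, the assignment sending each generator $A_{i,j}$ of $\widetilde{P}_{n}(\T)$ to its chosen lift $B_{i,j}$ (a representative in $\Gamma_{1,n}$, as in \req{generators of FPn}) respects all relations and hence defines a homomorphism $s:\widetilde{P}_{n}(\T)\to\widetilde{FP}_{n}(\T)$; by construction it composes with the projection to the identity, so it is a section. I would then invoke the elementary group-theoretic lemma that a central extension $1\to A\to E\to Q\to 1$ with $A$ central and admitting a section is isomorphic to $A\oplus Q$: the map $A\times Q\to E$, $(a,q)\mapsto a\cdot s(q)$, is an isomorphism precisely because $A$ is central (so it is a homomorphism), injective (kernel meets image of $s$ trivially), and surjective (every element differs from a lift by a kernel element). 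Applying this with $A=\Z^{n}$ and $Q=\widetilde{P}_{n}(\T)$ yields the claimed isomorphism $\widetilde{FP}_{n}(\T)\approx\Z^{n}\oplus\widetilde{P}_{n}(\T)$.

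The main obstacle is not the final formal splitting argument, which is routine, but the verification that the twist-bookkeeping in the lift of \text{(TR)}, \text{(QR1)} and \text{(QR2)} genuinely closes up with exponent zero when $g=1$. This requires carefully re-tracing the lantern-relation induction of \reth{presentationframed} to confirm that the only place a nontrivial power of $\tau_{\delta_{k}}$ could survive is the $2(g-1)$ exponent, and that the two new quotient relations \text{(QR1)}, \text{(QR2)} — which have no analogue in the closed higher-genus case — do not introduce fresh boundary-twist factors. I expect that the symmetry of the curves $\delta_{r,s}$ entering \req{generators of FPn} makes these products of $\tau_{\delta}$'s telescope to the identity, but this is the computation that must be done with care rather than asserted.
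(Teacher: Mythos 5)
Your proposal matches the paper's own argument essentially verbatim: the paper likewise lifts the presentation of $\widetilde{P}_{n}(\T)$ by rerunning the proof of Theorem~\ref{presentationframed} for $g=1$, observes that the $(TR)$ relation lifts without correction precisely because the exponent $2(g-1)$ vanishes, checks that $(QR1)$ and $(QR2)$ lift, and concludes from centrality of the boundary twists that the split central extension is a direct sum. Your identification of the $2(g-1)=0$ observation as the crux, and your caution about verifying the $(QR1)$/$(QR2)$ lifts, are exactly the points the paper relies on.
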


\begin{prop}\label{prop:thirdequiv4}
The group  $\widetilde{FP}_{n}(\T)$  is isomorphic to $\displaystyle \frac{FP_{n}(\T)}{\Z\oplus\Z}$.
 \end{prop}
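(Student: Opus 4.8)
The plan is to assemble the isomorphism from the two splittings already established together with the description of the centre of $P_{n}(\T)$. By \reth{extension centrale par Zn cas pure tore} the framed group splits as a direct product $FP_{n}(\T)\cong\Z^{n}\times P_{n}(\T)$, the $\Z^{n}$ recording the framings and the projection onto $P_{n}(\T)$ being the forgetting-framing map. By \repr{thirdequiv3} the capped version splits as $\widetilde{FP}_{n}(\T)\cong\Z^{n}\oplus\widetilde{P}_{n}(\T)$, with the same $\Z^{n}$ generated by the Dehn twists along the capping curves. Finally, by \repr{braids as mapping classes} one has $\widetilde{P}_{n}(\T)\cong P_{n}(\T)/Z(P_{n}(\T))$, and $Z(P_{n}(\T))=\Z^{2}$ by the remarks following \repr{braids as mapping classes}. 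The subgroup $\Z\oplus\Z$ of the statement is exactly this centre $Z(P_{n}(\T))$, viewed as a central subgroup of $FP_{n}(\T)$ through the second factor of the decomposition $\Z^{n}\times P_{n}(\T)$.

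With these identifications the computation is immediate. Letting $q\colon P_{n}(\T)\to\widetilde{P}_{n}(\T)$ be the canonical projection, with $\ker q=Z(P_{n}(\T))$, I would consider the homomorphism $\operatorname{id}_{\Z^{n}}\times q\colon\Z^{n}\times P_{n}(\T)\to\Z^{n}\times\widetilde{P}_{n}(\T)$. It is surjective and its kernel is the second-factor copy of $\ker q$, so it is isomorphic to $Z(P_{n}(\T))\cong\Z^{2}$. Transporting this map through the isomorphisms of \reth{extension centrale par Zn cas pure tore} and \repr{thirdequiv3} produces a surjection $FP_{n}(\T)\to\widetilde{FP}_{n}(\T)$ whose kernel is $\Z\oplus\Z$, and hence $\widetilde{FP}_{n}(\T)\cong FP_{n}(\T)/(\Z\oplus\Z)$.

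The point requiring care, and the main obstacle, is the compatibility of the two splittings: a priori the isomorphisms of \reth{extension centrale par Zn cas pure tore} and \repr{thirdequiv3} could use unrelated identifications of their $\Z^{n}$ factors and of their complementary factors, so that $\operatorname{id}\times q$ need not descend from a single natural homomorphism between the two groups. The cleanest way to settle this, and to obtain the map intrinsically, is to use the evaluation fibration $Ev\colon\diff{\T}\to F_{n}(\T)$ of \repr{framed braids as mapping classes}, whose fibre $\diff{\T,\underline{v}}$ satisfies $\pi_{0}(\diff{\T,\underline{v}})\cong\Gamma_{1,n}$. The tail of its long exact homotopy sequence reads $\pi_{1}(\diff{\T})\stackrel{Ev_{\ast}}{\to}FP_{n}(\T)\stackrel{\partial_{\ast}}{\to}\Gamma_{1,n}\stackrel{\lambda_{1}^{n}}{\to}\Gamma_{1}\to1$, so exactness identifies $\operatorname{im}\partial_{\ast}=\ker\lambda_{1}^{n}=\widetilde{FP}_{n}(\T)$ and $\ker\partial_{\ast}=\operatorname{im}Ev_{\ast}$. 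This gives the desired natural surjection $\partial_{\ast}\colon FP_{n}(\T)\to\widetilde{FP}_{n}(\T)$ at one stroke.

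It then remains only to compute $\ker\partial_{\ast}=\operatorname{im}Ev_{\ast}$. Since $\diff{\T}$ is homotopy equivalent to $\T$ (via the subgroup of translations) one has $\pi_{1}(\diff{\T})=\Z^{2}$, and the two generating loops are the one-parameter families of translations around the two fundamental curves of the torus. Under $Ev$ each such loop carries all $n$ framed points simultaneously around one fundamental curve, leaving the framings unrotated; these are precisely two generators of $Z(P_{n}(\T))=\Z^{2}$ sitting in the unframed factor. Thus $Ev_{\ast}$ is injective with image $Z(P_{n}(\T))$, so $\ker\partial_{\ast}\cong\Z\oplus\Z$ and $\widetilde{FP}_{n}(\T)\cong FP_{n}(\T)/(\Z\oplus\Z)$, in agreement with the quick computation above. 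Verifying that these translation loops do generate the full centre $Z(P_{n}(\T))$ is the one substantive point in this route.
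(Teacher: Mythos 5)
Your proof is correct and takes essentially the same approach as the paper, whose own (very terse) proof cites exactly the two routes you develop: combining \reth{extension centrale par Zn cas pure tore} with \repr{thirdequiv3}, and alternatively reading off the long exact sequence of the evaluation fibration together with $\pi_{1}\bigl(\diff{\T}\bigr)=\pi_{1}(\T)=\Z\oplus\Z$. Your added care about the compatibility of the two splittings and about $Ev_{\ast}$ being injective with image the full centre $Z(P_{n}(\T))$ fills in details the paper leaves implicit.
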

\begin{proof}
It follows from \repr{thirdequiv3} and \reth{extension centrale par Zn cas pure tore}.
One can easily deduce the result also considering the long exact sequence for $\Sigma_{g,b}=\T$
in the proof of \repr{framed braids as mapping classes}
and remarking  that $\pi_{1}\bigl(\diff{\T}\bigr)=\pi_1(\T)=\Z\oplus\Z$ (see \cite{S2}). 
\end{proof}

Finally, let $\chi_n: \Gamma_{1,n} \to \mathrm{P}\Gamma_1^{2n}$ be the injective map defined in Section \ref{section:frameddehn}.

\begin{prop}\label{prop:thirdequiv2}
The group  $\chi_n(\widetilde{FP}_n(\T))$ coincides with ${\displaystyle 
\build{\cap}{j=1}{n}C_{\widetilde{P}_{2n} (\T)}(\tau_{\gamma_j})}$,
where $C_{\widetilde{P}_{2n} (\T)}(\tau_{\gamma_j})$ is the centralizer of the Dehn twist  $\tau_{\gamma_j}$ in $\widetilde{P}_{2n} (\T)$.
\end{prop}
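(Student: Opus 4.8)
The plan is to imitate closely the proof of \repr{thirdequiv}, working throughout inside the pure mapping class group $\mathrm{P}\Gamma_1^{2n}$, where $\widetilde{P}_{2n}(\T)=\ker{\phi_{2n}}$ sits and into which $\widetilde{FP}_n(\T)=\ker{\lambda_1^n}$ is embedded by the injective map $\chi_n$. The torus analogue of the $\chi_n$-diagram of Section~\ref{section:frameddehn} gives $\phi_{2n}\circ\chi_n=\lambda_1^n$, so in particular $\chi_n$ maps $\widetilde{FP}_n(\T)$ into $\widetilde{P}_{2n}(\T)$. Moreover the twist $\tau_{\gamma_j}$ along the boundary of the $j$th capping disk is the image under $\chi_n$ of the boundary twist $\tau_{\delta_j}\in\ker{\lambda_1^n}=\widetilde{FP}_n(\T)$, so each $\tau_{\gamma_j}$ already lies in $\chi_n(\widetilde{FP}_n(\T))$ --- a fact I shall use in both inclusions.

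For the inclusion $\chi_n(\widetilde{FP}_n(\T))\subseteq\bigcap_{j=1}^n C_{\widetilde{P}_{2n}(\T)}(\tau_{\gamma_j})$ I would argue exactly as in \repr{thirdequiv}. As in the closed case (\repr{thirdequiv3}), $\widetilde{FP}_n(\T)$ is generated by the boundary twists $\tau_{\delta_k}$ together with representatives $B_{i,j}$ of the generators $A_{i,j}$ of $\widetilde{P}_n(\T)$ from \reth{purepres2}, each a multitwist given by the $g=1$ version of \eqref{eq:generators of FPn} and supported on $\Sigma_{1,n}$ (up to the boundary factors $\tau_{\delta_s}$). Under $\chi_n$ every curve involved is sent either to a curve disjoint from all the $\gamma_j$ or to one of the $\gamma_s$ themselves; since twists along such curves commute with each $\tau_{\gamma_j}$, the image $\chi_n(g)$ of any $g\in\widetilde{FP}_n(\T)$ centralises every $\tau_{\gamma_j}$, which gives the inclusion.

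For the reverse inclusion, take $g\in\bigcap_{j=1}^n C_{\widetilde{P}_{2n}(\T)}(\tau_{\gamma_j})$. The crucial tool is the equivalence (a)$\Leftrightarrow$(d) of Corollary~\ref{centrcor}: although that corollary and the underlying Theorem~\ref{centr} are stated for $g>1$, Remark~\ref{centrrem} records that this equivalence is valid in the torus case as well, inside $\mathrm{P}\Gamma_1^{2n}$ and hence for the subgroup $\widetilde{P}_{2n}(\T)$. It yields $[2j-1,2j]*g=[2j-1,2j]$ for every $j$, so that $g$ fixes each capping arc and is, on the $j$th capping disk and up to isotopy, a power of $\tau_{\gamma_j}$. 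Multiplying $g$ by suitable powers of the $\tau_{\gamma_j}\in\chi_n(\widetilde{FP}_n(\T))$, we may assume that $g=\mathrm{Id}$ on each capping disk, so that $g$ is supported on $\Sigma_{1,n}$ and equals $\chi_n(\tilde g)$ for a unique $\tilde g\in\Gamma_{1,n}$. Finally $\lambda_1^n(\tilde g)=\phi_{2n}(\chi_n(\tilde g))=\phi_{2n}(g)=1$ since $g\in\ker{\phi_{2n}}$, whence $\tilde g\in\ker{\lambda_1^n}=\widetilde{FP}_n(\T)$ and $g\in\chi_n(\widetilde{FP}_n(\T))$.

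The main obstacle is the reverse inclusion, and within it the passage from ``$g$ centralises $\tau_{\gamma_j}$'' to ``$g$ fixes the arc $[2j-1,2j]$'': this is precisely the $g=1$ incarnation of Corollary~\ref{centrcor}, not covered by Theorem~\ref{centr} as stated, and it must be supplied by the folklore equivalence of Remark~\ref{centrrem}. The only other point requiring care is that fixing the arc determines $g$ on the capping disks only up to powers of $\tau_{\gamma_j}$; this is harmless exactly because those twists already lie in $\chi_n(\widetilde{FP}_n(\T))$, as observed in the first paragraph.
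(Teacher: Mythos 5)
Your proposal is correct and follows essentially the same route as the paper, which simply says that the proof of \repr{thirdequiv} goes through verbatim once the equivalence (a)$\Leftrightarrow$(d) of Corollary~\ref{centrcor} is supplied in the $g=1$ case by the folklore fact of Remark~\ref{centrrem} --- exactly the point you isolate as the crux. Your extra observation that fixing the arcs determines $g$ on the capping disks only up to powers of $\tau_{\gamma_j}$, which are harmless because they lie in $\chi_n(\widetilde{FP}_n(\T))$, is a small but welcome tightening of the argument the paper leaves implicit.
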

\begin{proof}
The proof is the same as the proof of \repr{thirdequiv} using Remark 7.
\end{proof}


\vspace{10pt}

\noindent PAOLO BELLINGERI, Universit\'{e} de Caen, CNRS UMR 6139, LMNO, Caen, 14000 (France).
Email: paolo.bellingeri@math.unicaen.fr

\vspace{5pt}
\noindent SYLVAIN GERVAIS, Universit\'{e} de Nantes, CNRS-UMR 6629, 
Laboratoire Jean Leray, 2, rue de la Houssini\`{e}re, F-44322 NANTES 
cedex 03 (France)

\end{document}